\newcommand{\ueq}[1][]{%
  \if\relax\detokenize{#1}\relax
    \sbox0{$\underbrace{=}_{}$}%
    \mathrel{\mathmakebox[\wd0]{=}}
  \else
    \mathrel{\underbrace{=}_{\mathclap{#1}}}
  \fi}
\newcommand {\ctn}{\cite}
\newcommand{\bx}{\mathbf x}
\newcommand{\bX}{\mathbf X}
\newcommand{\by}{\mathbf y}
\newcommand{\bY}{\mathbf Y}
\newtheorem{theorem}{Theorem}
\numberwithin{equation}{section}
\numberwithin{algo}{section}
\numberwithin{table}{section}
\numberwithin{figure}{section}
\newtheorem{lemma}{Lemma}[section]
\newtheorem{as}{Assumption}
\begin{document}

\title{\textbf{Posterior Consistency of Bayesian Inverse Regression and Inverse Reference Distributions}}
\author{Debashis Chatterjee$^{\dag}$ and Sourabh Bhattacharya$^{\dag, +}$ }
\date{}
\maketitle
\begin{center}
$^{\ddag}$ Indian Statistical Institute\\
$+$ Corresponding author:  \href{mailto: bhsourabh@gmail.com}{bhsourabh@gmail.com}
\end{center}

\begin{abstract}
We consider Bayesian inference in inverse regression problems where the objective is to infer about unobserved covariates from observed responses and covariates. 
We establish posterior consistency of such unobserved covariates in Bayesian inverse regression problems under appropriate priors in a leave-one-out
cross-validation setup. We relate this to posterior consistency of inverse reference distributions (\ctn{Bhattacharya13}) for assessing model adequacy.  
We illustrate our theory and methods with various examples of Bayesian inverse regression, along with adequate simulation experiments.
\\[2mm]
{\bf Keywords:} {\it Gaussian process; Inverse reference distribution; Kullback-Leibler divergence rate; Leave-one-out cross-validation; 
Poisson regression; Posterior convergence.}
\end{abstract}

\section{Introduction}
\label{sec:intro}

Assessment of model adequacy is always fundamental in statistics -- this basic realization has given rise to a huge literature on testing goodness of model fit.
However, compared to the classical literature, the Bayesian literature on model adequacy test is much scarce. 
A comprehensive overview of the existing approaches is provided in \ctn{Vehtari12}. Two relatively prominent existing formal and general 
approaches in this direction are those of \ctn{Gelman96} and \ctn{Bayarri00}. The former relies on posterior predictive $P$-value associated with a discrepancy measure
that is a function of the data as well as the parameters. The latter criticize this approach on account of `double use of the data' and come up with two alternative
$P$-values, demonstrating their advantages over the posterior predictive $P$-value. Indeed, double use of the data prevents the posterior predictive $P$-value to have
uniform distribution on $[0,1]$, while the $P$-values of \ctn{Bayarri00} at least asymptotically has the desired uniform distribution on $[0,1]$.

\ctn{Bhattacharya13} introduced a different approach to Bayesian model assessment in `inverse problems', where the model is built with response
variables and covariates, but unlike `forward problems', the interest is to predict unobserved covariates using the rest of the data, not response variables
from the covariates and the remaining data. The palaeoclimate reconstruction problem provided the necessary motivation, where 
`modern data' consisting of multivariate counts of species and observed climate values, and fossil assemblage data on the same species, deposited in lake sediments
over thousands of years, are available. The interest is to reconstruct the past climate values corresponding to the fossil assemblages using the available data.
Here, the species composition is modeled as a function of climate, since variations in climate is responsible for variations in species composition, but not vice versa.
The inverse nature of the problem is evident since the interest lies in prediction of the past climate variables, not the species composition.
Since the past climates are the unobserved (unknown) covariates, it is natural to consider a prior distribution for such unknown quantities.

The motivating example arises in quantitative palaeoclimate reconstruction where `modern data' consisting of multivariate counts of species
are available along with the observed climate values. Also available are fossil assemblages of the same species, but deposited in lake sediments 
for past thousands of years. This is the fossil species data. However, the past climates corresponding to the fossil species data are unknown, and it is of interest
to predict the past climates given the modern data and the fossil species data. Roughly, the species composition are regarded as functions
of climate variables, since in general ecological terms, variations in climate drives variations in species, but not vice versa.
However, since the interest lies in prediction of climate variables,  the inverse nature of the problem is clear. The past climates, which must be regarded as random variables,
may also be interpreted as {\it unobserved covariates}. It is thus natural to put a prior probability distribution on the unobserved covariates.

Broadly, the model assessment method of \ctn{Bhattacharya13} is based on the simple idea that the model fits the data if the posterior distribution 
of the random variables corresponding to the covariates capture the observed values of the covariates. Now note that the true (observed) values will not be known in reality,
which is why the training data (the modern data in the palaeoclimate problem, for instance) with observed covariates has been considered by \ctn{Bhattacharya13}.
Assuming that the covariates are unobserved, one can predict these values in terms of the posterior distribution of the random quantities standing for the (assumed) missing
covariates. \ctn{Bhattacharya13} demonstrate that it makes more sense to consider leave-one-out cross-validation (LOO-CV) of the covariates 
particularly when some of the model parameters
are given improper prior. From the traditional statistical perspective, LOO-CV is also a very natural method in model assessment. 
We henceforth concentrate on the LOO-CV approach proposed by \ctn{Bhattacharya13}. Briefly, based on the LOO-CV posteriors of the covariates, some appropriate
`inverse reference distribution' (IRD) is constructed. This IRD can be viewed as a distribution of some appropriate statistic associated with the unobserved covariates.
If the distribution captures the observed statistic associated with the observed covariates, then the model is said to fit the data. Otherwise, the model does not fit the data.
\ctn{Bhattacharya13} provide a Bayesian decision theoretic justification of the key idea and show that the relevant IRD based posterior probability analogue of the
aforementioned $P$-values have the uniform distribution on $[0,1]$.
Furthermore, ample simulation studies and successful applications to several real, palaeoclimate models and data sets reported in \ctn{Bhattacharya13},
\ctn{Bhattacharya06} and \ctn{Sabya13}, vindicate the practicality and usefulness of the IRD approach.

The rest of our paper is structured as follows. The general premise of our inverse regression model, LOO-CV and the IRD approach are described in Section \ref{sec:prelims}.  
General consistency issues of the same are discussed in Section \ref{sec:discussion_consistency_IRD}.
We propose an appropriate prior for $\tilde x_i$ and investigate its properties in Section \ref{sec:prior}, and in Section \ref{sec:consistency_loo_cv}
prove consistency of the LOO-CV posteriors under reasonably mild conditions.
Relating consistency of the LOO-CV posteriors, we prove consistency of the IRD approach in Section \ref{sec:consistency_IRD}.
In Section \ref{sec:discussion_results} we provide a discussion on the issues and applicability of our asymptotic theory in various inverse regression contexts 
and in Section \ref{sec:simstudy}, we illustrate our asymptotic theory with simulation studies. Finally, we make concluding remarks in Section \ref{sec:conclusion}.

\section{Preliminaries and general setup}
\label{sec:prelims}

We consider experiment with $n$ covariate observations $x_1, x_2, \ldots, x_n $ along with responses 
$\{y_{ij} :1\leq i\leq n, 1\leq j\leq m\}$.
In other words, the experiment considered here will allow us to have $m$ samples of responses $\{y_{i1}, y_{i2}, \ldots, y_{im}\}$ against covariate observations 
$x_{i}$, for $i=1, 2,\ldots, n$. Both $x_i$ and $y_{ij}$ are allowed to be multidimensional. 
In this article, we consider large sample scenario where both $m, n\rightarrow \infty$. 

For $i=1,\ldots,n$ and $j=1,\ldots,m$, consider the following general model setup: 
conditionally on $x_i$ and $\theta$,
\begin{eqnarray}\label{model}
\begin{aligned}
	& y_{ij} \sim f_{\theta}\left(x_i\right), 
\end{aligned}
\end{eqnarray}
independently.
In (\ref{model}), $f_\theta$ is a known distribution depending upon 
(a set of) parameters $\theta\in\Theta$, where $\Theta$ is the parameter space, which may be infinite-dimensional. 
For the sake of generality, we shall consider $\theta=(\eta,\xi)$, where $\eta$ is a function of the covariates, which we more explicitly denote as $\eta(x)$, 
where $x\in\mathcal X$, $\mathcal X$ being the space of covariates. The part $\xi$ of $\eta$ will be assumed to consist of other parameters, such as the unknown
error variance.

\subsection{Examples of the above model setup}
\label{subsec:model_examples}
\begin{itemize}
\item[(i)] $y_{ij}\sim Bernoulli(p_i)$, where $p_i=H\left(\eta(x_i)\right)$, where $H$ is some appropriate link function and $\eta$ is some 
function with known or unknown form. For known, suitably parameterized form, the model is parametric. If the form of $\eta$ is unknown, 
one may model it by a Gaussian process, assuming adequate smoothness of the function.
\item[(ii)] $y_{ij}\sim Poisson(\lambda_i)$, where $\lambda_i=H\left(\eta(x_i)\right)$, where $H$ is some appropriate link function and $\eta$ is 
some function with known (parametric) or unknown (nonparametric) form. Again, in case of unknown form of $\eta$, 
the Gaussian process can be used as a suitable model under sufficient smoothness
assumptions.
\item[(iii)] $y_{ij}=\eta(x_i)+\epsilon_{ij}$, where $\eta$ is a parametric or nonparametric function and
$\epsilon_{ij}$ are $iid$ Gaussian errors. In particular, $\eta(x_i)$ may be a linear regression function, that is, $\eta(x_i)=\beta'x_i$, where
$\beta$ is a vector of unknown parameters.
Non-linear forms of $\eta$ are also permitted. 
Also, $\eta$ may be a reasonably smooth function of unknown form, modeled by some appropriate Gaussian process.		
\end{itemize}

\subsection{The Bayesian inverse LOO-CV setup and the IRD approach}
\label{subsec:loo_cv}

In the Bayesian inverse LOO-CV setup, for $i\geq 1$, we successively leave out $x_i$ from the data set, and attempt to predict the same
using the rest of the dataset, in the form of the posterior $\pi(\tilde x_i|\bX_{n,-i},\bY_{nm})$, where $\bY_{nm}=\left\{y_{ij}:i=1,\ldots,n;j=1,\ldots,m\right\}$,
$\bX_n=\{x_i:i=1,\ldots,n\}$ and $\bX_{n,-i}=\bX_n\backslash\{x_i\}$, and $\tilde x_i$ is the random quantity corresponding to the left out $x_i$.

In this article, we are interested in proving that $\pi(\tilde x_i\in U^c_i|\bX_{n,-i},\bY_{nm})\rightarrow 0$ almost surely as $m,n\rightarrow\infty$, where $U_i$
is any neighborhood of $x_i$. Here, for any set $A$, $A^c$ denotes the complement of $A$.

Note that the $i$-th LOO-CV posterior is given by 
\begin{equation}
\pi(\tilde x_i|\bX_{n,-i},\bY_{nm})=\int_{\Theta}\pi(\tilde x_i|\theta,\by_i)d\pi(\theta|\bX_{n,-i},\bY_{nm}). 
\label{eq:loo_cv}
\end{equation}

In the IRD approach, we consider the distribution of any suitable statistic $T(\tilde\bX_n)$, where the distribution of 
$\tilde\bX_n=\left\{\tilde x_1,\ldots,\tilde x_n\right\}$
is induced by the respective LOO-CV posteriors of the form (\ref{eq:loo_cv}). The distribution of $T(\tilde\bX_n)$ is referred to as the IRD
in \ctn{Bhattacharya13}. Now consider the observed statistic $T(\bX_n)$. In a nutshell, if $T(\bX_n)$ falls within the desired $100(1-\alpha)\%$ ($0<\alpha<1$)
of the IRD, then the model is said to fit the data; 
otherwise, the model does not fit the data. Typical examples of $T(\bX_n)$, which turned out to be useful in the palaeoclimate modeling context
are (see \ctn{Sabya13}) are: 
\begin{eqnarray}
T_1(\bX_n) &=& \sum_{i=1}^n\frac{(x_i - E_{\pi}(\tilde x_i))^2}{V_{\pi}(\tilde x_i)}
\label{eq:discrepancy1}\\
T_2(\bX_n) &=& \sum_{i=1}^n\frac{|x_i - E_{\pi}(\tilde x_i)|}{\sqrt{V_{\pi}(\tilde x_i)}}
\label{eq:discrepancy2}\\
T_3(\bX_n) &=& x_i
\label{eq:discrepancy3}
\end{eqnarray}
To obtain $T(\tilde\bX_n)$ corresponding to $T(\bX_n)$ above, we only need to replace $x_i$ with $\tilde x_i$ in (\ref{eq:discrepancy1}) -- (\ref{eq:discrepancy3}). 
In the above, $E_{\pi}$ and $V_{\pi}$ denote the expectation and the variance, respectively, with respect to the LOO-CV posteriors.
The statistic $T_3(\tilde\bX_n)$ is $\tilde x_i$ itself, so that the posterior of $T_3(\tilde\bX_n)$ is nothing but the $i$-th LOO-CV posterior. Such a statistic can
be important when there is particular interest in $x_i$, for instance, if one suspects outlyingness of $x_i$. An example of such an issue is considered in 
\ctn{Bhatta07}.

\section{Discussion regarding consistency of the LOO-CV and the IRD approach}
\label{sec:discussion_consistency_IRD}

The question now arises if the IRD approach is at all consistent. That is, whether by increasing $n$ and $m$, the distribution of $T(\tilde\bx)$ will increasingly
concentrate around $T(\bx)$. A sufficient condition for this to hold is consistency of the $i$-th LOO-CV posterior at $x_i$, for $i\geq 1$.
From (\ref{eq:loo_cv}) it is clear that consistency of $\pi(\theta|\bX_{n,-i},\bY_{nm})$ at the truth $\theta_0$ is required for this purpose, but even if
$\theta$ in $\pi(\tilde x_i|\theta,\by_i)$ is replaced with $\theta_0$, consistency of (\ref{eq:loo_cv}) at $x_i$ does not hold for arbitrary priors on $\tilde x_i$,
and for fixed $m\geq 1$. This has been demonstrated in \ctn{Chatterjee17} with the help of a simple Poisson regression with mean $\theta x_i$, where both $\theta$ and $x_i$
are positive quantities. Special priors on $\tilde x_i$ is needed, along with the setup with $m\rightarrow\infty$, to achieve desired consistency of 
the LOO-CV posterior of $\tilde x_i$ at $x_i$. In Section \ref{sec:prior} we propose such an appropriate prior form and establish some requisite properties
of the prior and $\pi(\tilde x_i|\theta,\by_i)$. With such prior and with conditions that ensure consistency of $\pi(\theta|\bX_{n,-i},\bY_{nm})$ at $\theta_0$, 
we establish consistency of the LOO-CV posteriors in Section \ref{sec:consistency_loo_cv}.

Indeed, in the setups that we consider, for any $m\geq 1$, 
$\pi(\theta|\bX_n,\bY_{nm})$ is consistent at the true value $\theta_0$.	
That is, for any neighbourhood $V$ of $\theta_0$, for given $m\geq 1$, $\pi(\theta\in V|\bX_n,\bY_{nm})\rightarrow 1$ almost surely, as $n\rightarrow\infty$.
Assuming complete separable metric space $\Theta$, this is again equivalent to weak convergence of $\pi(\theta|\bX_n,\bY_{nm})$ to $\delta_{\theta_0}$,
as $n\rightarrow\infty$, for $m\geq 1$, for almost all data sequences (see, for example, \ctn{Ghosh03}, \ctn{Ghosal17}).

In our situations, we assume that the conditions of \ctn{Shalizi09} hold for $m\geq 1$, which would ensure consistency of 
$\pi(\theta|\bX_n,\bY_{nm})$ is consistent at the true value $\theta_0$. The advantages of Shalizi's results include great generality of the model and prior
including dependent setups, and reasonably easy to verify conditions. The results crucially hinge on verification of the asymptotic equipartition property.
In Section \ref{subsec:shalizi_briefing} we provide an overview of the main assumptions and result of Shalizi. The full details of the seven assumptions 
($S1$)--($S7$) of Shalizi are provided in the Appendix. 
In Section \ref{subsec:weak_conv} we show that Shalizi's result leads to weak convergence of the posterior of $\theta$ to the point mass at $\theta_0$, which will play
an useful role in our proof of consistency of the LOO-CV posteriors.

\subsection{A briefing of Shalizi's approach}
\label{subsec:shalizi_briefing}
Let $\bY_n=(Y_1,\ldots,Y_n)^T$, and let $f_{\theta}(\bY_n)$ and $f_{\theta_0}(\bY_n)$ denote the observed and the true likelihoods respectively, under the given value of the parameter $\theta$
and the true parameter $\theta_0$. We assume that $\theta\in\Theta$, where $\Theta$ is the (often infinite-dimensional) parameter space. However, it is not required to  
assume that $\theta_0\in\Theta$, thus allowing misspecification. This is the general situation; however, as already mentioned, we do not consider misspecification
for our purpose.
The key ingredient associated with Shalizi's approach to proving convergence of the posterior distribution of $\theta$ is to show that the 
asymptotic equipartition property holds.
To elucidate, let us consider the following likelihood ratio:
\begin{equation*}
R_n(\theta)=\frac{f_{\theta}(\bY_n)}{f_{\theta_0}(\bY_n)}.
\end{equation*}
Then, to say that for each $\theta\in\Theta$, the generalized or relative asymptotic equipartition property holds, we mean
\begin{equation}
\underset{n\rightarrow\infty}{\lim}~\frac{1}{n}\log R_n(\theta)=-h(\theta),
\label{eq:equipartition}
\end{equation}
almost surely, where
$h(\theta)$ is the KL-divergence rate given by
\begin{equation}
h(\theta)=\underset{n\rightarrow\infty}{\lim}~\frac{1}{n}E_{\theta_0}\left(\log\frac{f_{\theta_0}(\bY_n)}{f_{\theta}(\bY_n)}\right),
\label{eq:S3}
\end{equation}
provided that it exists (possibly being infinite), where $E_{\theta_0}$ denotes expectation with respect to the true model.
Let
\begin{align}
h\left(A\right)&=\underset{\theta\in A}{\mbox{ess~inf}}~h(\theta);\notag\\ 
J(\theta)&=h(\theta)-h(\Theta);\notag\\ 
J(A)&=\underset{\theta\in A}{\mbox{ess~inf}}~J(\theta).\notag 
\end{align}
Thus, $h(A)$ can be roughly interpreted as the minimum KL-divergence between the postulated and the true model over the set $A$. If $h(\Theta)>0$, this indicates
model misspecification. 
For $A\subset\Theta$, $h(A)>h(\Theta)$, so that $J(A)>0$.

As regards the prior, it is required to construct an appropriate sequence of sieves $\mathcal G_n$ such that $\mathcal G_n\rightarrow\Theta$ and $\pi(\mathcal G^c_n)\leq\alpha\exp(-\beta n)$,
for some $\alpha>0$. 

With the above notions, verification of (\ref{eq:equipartition}) along with several other technical conditions ensure that for any $A\subseteq\Theta$ such that $\pi(A)>0$, 
\begin{equation}
\underset{n\rightarrow\infty}{\lim}~\pi(A|\bY_n)=0,
\label{eq:post_conv1}
\end{equation}
almost surely, provided that $h(A)>h(\Theta)$.

\subsection{Weak convergence of Shalizi's result}
\label{subsec:weak_conv}

From (\ref{eq:post_conv1}) it follows that for any $\epsilon>0$, 
\begin{equation}
\underset{n\rightarrow\infty}{\lim}~\pi(\mathbb N^c_{\epsilon}|\bY_n)=0, 
\label{eq:cons1}
\end{equation}
where $\mathbb N_{\epsilon}=\left\{\theta:h(\theta)\leq h\left(\Theta\right)+\epsilon\right\}$.
In our case, we shall not consider misspecification, as we are interested in ensuring posterior consistency. Thus, we have $h\left(\Theta\right)=0$ in our context.
Now observe that $h(\theta)$ given by (\ref{eq:S3}) is not a proper KL-divergence between two distributions. Thus the question arises if 
(\ref{eq:cons1}) suffices for posterior consistency, and hence weak convergence of the posterior to $\delta_{\theta_0}$. Lemma \ref{lemma:lemma1} below settles this question
in the affirmative.
\begin{lemma}
\label{lemma:lemma1}
Given any neighborhood $U$ of $\theta_0$, the set $\mathbb N_{\epsilon}$ is contained in $U$ for sufficiently small $\epsilon$. 
\end{lemma}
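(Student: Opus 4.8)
The plan is to reduce the claim to a statement about the sublevel sets of the Kullback--Leibler divergence rate $h$ and then invoke the identifiability-type hypothesis already built into Shalizi's conditions. Since we are not in the misspecified case, $h(\Theta)=0$, so $\mathbb N_\epsilon=\{\theta:h(\theta)\le\epsilon\}$, and these sets are nested, decreasing as $\epsilon\downarrow 0$. Fixing an arbitrary neighborhood $U$ of $\theta_0$, proving $\mathbb N_\epsilon\subseteq U$ is the same as proving $h(\theta)>\epsilon$ for every $\theta\in U^c$, i.e.\ that $\epsilon$ can be chosen strictly below the value of $h$ on the complement of $U$.

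The key input is that $h(U^c)>h(\Theta)=0$. The set $U^c$ is closed and, $U$ being a neighborhood of $\theta_0$, it is bounded away from $\theta_0$ in the sense that $\theta_0\notin\overline{U^c}$; hence the non-degeneracy property recorded in the excerpt — that $h(A)>h(\Theta)$, and so $J(A)>0$, for sets $A$ bounded away from $\theta_0$, which is part of the package of assumptions $(S1)$--$(S7)$ underlying (\ref{eq:post_conv1}) — applies with $A=U^c$ and gives $h(U^c)>0$. Intuitively, $h(\theta)=0$ forces $\theta$ to be indistinguishable from $\theta_0$ in KL-rate, which in the non-misspecified, identifiable setups considered here means $\theta=\theta_0$. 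Setting $\epsilon=\tfrac12 h(U^c)>0$, every $\theta\in U^c$ satisfies $h(\theta)\ge h(U^c)>\epsilon$, so $\mathbb N_\epsilon\cap U^c=\emptyset$, i.e.\ $\mathbb N_\epsilon\subseteq U$; and the same holds for every smaller $\epsilon$.

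The one technical point to be careful about is that $h(U^c)$ is defined as an \emph{essential} infimum with respect to the prior $\pi$, so a priori the argument above only shows that $\mathbb N_\epsilon\cap U^c$ is $\pi$-null rather than literally empty. To obtain genuine set containment I would additionally use lower semicontinuity of $\theta\mapsto h(\theta)$ — natural here, since $h$ is a limit of averages of expected log-likelihood ratios and is typically continuous in the concrete parametric and Gaussian-process models of Sections~\ref{sec:prior}--\ref{sec:consistency_loo_cv} — so that $\{h\le\epsilon\}$ is closed and the infimum of $h$ over the closed set $U^c$ is attained and coincides with $h(U^c)$. I expect this reconciliation of essential versus pointwise infimum to be the only real obstacle; alternatively, one can note that for the sole use made of the lemma, namely passing from (\ref{eq:cons1}) to $\pi(U^c\mid\bY_n)\to 0$ and hence to weak convergence of the posterior of $\theta$ to $\delta_{\theta_0}$, the $\pi$-null version already suffices, because the posterior is absolutely continuous with respect to the prior.
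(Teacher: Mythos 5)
There is a genuine gap, and it is at the heart of the lemma: your argument does not prove that $h$ is bounded away from zero off $U$, it assumes it. The property you invoke --- ``$h(A)>h(\Theta)$, hence $J(A)>0$, for sets $A$ bounded away from $\theta_0$'' --- is not one of Shalizi's conditions (S1)--(S7); in that framework $h(A)>h(\Theta)$ is a \emph{hypothesis imposed on the set $A$} in the statement of the posterior-convergence result (\ref{eq:post_conv1}), not a fact guaranteed for every set avoiding a neighborhood of $\theta_0$. The sentence in Section \ref{subsec:shalizi_briefing} that you lean on cannot be read as such a guarantee (taken literally, for arbitrary $A\subset\Theta$, it is false, e.g.\ for $A\ni\theta_0$), and the ``bounded away from $\theta_0$'' qualification is your own interpolation. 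Since the entire content of Lemma \ref{lemma:lemma1} is precisely to convert ``$h(\theta)$ small'' into ``$\theta$ close to $\theta_0$'', taking $h(U^c)>0$ as given begs the question: with that granted, the lemma is a one-line consequence, and nothing substantive has been established about the KL-divergence rate $h$ defined in (\ref{eq:S3}).

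The paper's proof goes the other way: it argues directly that $h(\theta)>0$ if and only if $\theta\neq\theta_0$, by observing that for each fixed $n$ the quantity $E_{\theta_0}\bigl(\log\frac{f_{\theta_0}(\bY_n)}{f_{\theta}(\bY_n)}\bigr)$ is a proper Kullback--Leibler divergence and is non-decreasing in $n$, so that once $\theta\neq\theta_0$ it exceeds some $\varepsilon>0$ for all $n$; this monotonicity step is exactly the ingredient your write-up is missing and would need to supply (by this or some other identifiability argument tied to the model) before the sublevel-set argument can start. That said, two of your observations have independent value and are in fact points where you are more careful than the paper: (i) you correctly identify that what is really needed is a \emph{uniform} bound $\inf_{\theta\in U^c}h(\theta)>\epsilon$, whereas the paper passes from pointwise positivity (with $\varepsilon$ depending on $\theta$) to $\mathbb N_{\epsilon}\subseteq U$ without addressing uniformity over a possibly non-compact $U^c$; and (ii) your remark that the essential-infimum (i.e.\ $\pi$-a.e.) version of the containment already suffices for the only use made of the lemma --- deducing $\pi(U^c|\bY_n)\rightarrow 0$ from (\ref{eq:cons1}), the posterior being absolutely continuous with respect to the prior --- is a legitimate way to sidestep the pointwise-versus-essential issue. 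But neither observation substitutes for the missing positivity argument, so as it stands the proposal does not prove the lemma.
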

\begin{proof}
It is sufficient to prove that $h(\theta)>0$ if and only if $\theta\neq\theta_0$.  
Note that 
$E_{\theta_0}\left(\log\frac{f_{\theta_0}(\bY_n)}{f_{\theta}(\bY_n)}\right)$ is a proper KL-divergence and hence is non-decreasing with $n$ (see \ctn{Erven14}). Hence if 
$\theta\neq\theta_0$, then there exists $\varepsilon>0$ such that $E_{\theta_0}\left(\log\frac{f_{\theta_0}(\bY_n)}{f_{\theta}(\bY_n)}\right)>\varepsilon$ for all $n\geq 1$.
Hence, $h(\theta)$ given by (\ref{eq:S3}) is larger than $\varepsilon$ if $\theta\neq\theta_0$.
Of course, if $h(\theta)>0$, we must have $\theta\neq\theta_0$, since otherwise, $E_{\theta_0}\left(\log\frac{f_{\theta_0}(\bY_n)}{f_{\theta}(\bY_n)}\right)=0$
for all $n$, which would imply $h(\theta)=0$.
This proves the lemma.
\end{proof}

It follows from Lemma \ref{lemma:lemma1} that for any neighborhood $U$ of $\theta_0$, $\pi(U|\bY_n)\rightarrow 1$, almost surely, as $n\rightarrow\infty$.
Thus, $\pi(\cdot|\bY_n)\stackrel{w}{\longrightarrow} \delta_{\theta_0}(\cdot)$, almost surely, as $n\rightarrow\infty$, where $``\stackrel{w}{\longrightarrow}"$
denotes weak convergence.

\section{Prior for $\tilde x_i$}
\label{sec:prior}
We consider the following prior for $\tilde x_i$: given $\theta$,
\begin{equation}
	\tilde x_i\sim Uniform\left(B_{im}(\theta)\right),
\label{eq:prior_x}
\end{equation}
where 
\begin{equation}
	B_{im}(\theta)=\left(\left\{x:\eta(x)\in \left[\bar y_i-\frac{cs_i}{\sqrt{m}},\bar y_i+\frac{cs_i}{\sqrt{m}}\right]\right\}\right),
\label{eq:set1}
\end{equation}
In (\ref{eq:set1}), $\bar y_i=\frac{1}{m}\sum_{j=1}^my_{ij}$ and $s^2_i=\frac{1}{m-1}\sum_{j=1}^m(y_{ij}-\bar y_i)^2$, and $c\geq 1$ is some constant.
We denote this prior by $\pi(\tilde x_i|\eta)$. Lemma \ref{lemma:prior_continuity} shows that the density or any probability 
associated with $\pi(\tilde x_i|\eta)$ is continuous with respect to $\eta$.

\subsection{Illustrations}
\label{subsec:illustrations_prior}
\begin{itemize}
	\item[(i)] $y_{ij}\sim Poisson(\theta x_i)$, where $\theta>0$ and $x_i>0$ for all $i$. Here, under the prior $\pi(\tilde x_i|\theta)$, 
		$\tilde x_i$ has uniform distribution on the set 
		$B_{im}(\theta)=\left\{x>0:\frac{\bar y_i-\frac{cs_i}{\sqrt{m}}}{\theta}\leq x\leq \frac{\bar y_i+\frac{cs_i}{\sqrt{m}}}{\theta}\right\}$.
	\item[(ii)] $y_{ij}\sim Poisson(\lambda_i)$, where $\lambda_i=\lambda(x_i)$, with $\lambda(x)=H(\eta(x))$. Here $H$ is a known, one-to-one, 
		continuously differentiable function and $\eta(\cdot)$ is an unknown function modeled by Gaussian process.
		Here, the prior for $\tilde x_i$ is the uniform distribution on $$B_{im}(\eta)=\left\{x:\eta(x)\in 
		H^{-1}\left\{\left[\bar y_i-\frac{cs_i}{\sqrt{m}},\bar y_i+\frac{cs_i}{\sqrt{m}}\right]\right\}\right\}.$$
	\item[(iii)] $y_{ij}\sim Bernoulli(p_i)$, where $p_i=\lambda(x_i)$, with $\lambda(x)=H(\eta(x))$. Here $H$ is a known, increasing,
		continuously differentiable, cumulative distribution function and $\eta(\cdot)$ is an unknown function modeled by some appropriate Gaussian process.
		Here, the prior for $\tilde x_i$ is the uniform distribution on $B_{im}(\eta)=\left\{x:\eta(x)\in 
		H^{-1}\left\{\left[\bar y_i-\frac{cs_i}{\sqrt{m}},\bar y_i+\frac{cs_i}{\sqrt{m}}\right]\right\}\right\}$.
	\item[(iv)] $y_{ij}=\eta(x_i)+\epsilon_{ij}$, where $\eta(\cdot)$ is an unknown function modeled by some appropriate Gaussian process,
		and $\epsilon_{ij}$ are $iid$ zero-mean Gaussian noise with variance $\sigma^2$. 
		Here, the prior for $\tilde x_i$ is the uniform distribution on $B_{im}(\eta)=\left\{x:\eta(x)\in 
		\left[\bar y_i-\frac{cs_i}{\sqrt{m}},\bar y_i+\frac{cs_i}{\sqrt{m}}\right]\right\}$.
\end{itemize}

\subsection{Some properties of the prior}
\label{subsec:prior_properties}
Our proposed prior for $\tilde x_i$ possesses several useful properties necessary for our asymptotic theory. These are formally provided
in the lemmas below.
\begin{lemma}
\label{lemma:prior_continuity}
	The prior density $\pi(\tilde x_i|\eta)$ or any probability associated with $\pi(\tilde x_i|\eta)$ is continuous with respect to $\eta$.
\end{lemma}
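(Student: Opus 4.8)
The plan is to show that the map $\eta \mapsto \pi(\tilde x_i \in A \mid \eta)$ is continuous for every (measurable) set $A$, by tracing through how $\eta$ enters the definition of the prior. The prior in (\ref{eq:prior_x}) is uniform on the set $B_{im}(\eta)$ defined in (\ref{eq:set1}), so for a fixed set $A$ one has
\begin{equation*}
\pi(\tilde x_i \in A \mid \eta) = \frac{\mathrm{Leb}\bigl(A \cap B_{im}(\eta)\bigr)}{\mathrm{Leb}\bigl(B_{im}(\eta)\bigr)},
\end{equation*}
and the density at a point $x$ is $\mathbf 1\{x \in B_{im}(\eta)\}/\mathrm{Leb}(B_{im}(\eta))$. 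Note first that the interval $I_i = [\bar y_i - cs_i/\sqrt m,\ \bar y_i + cs_i/\sqrt m]$ depends only on the data $\by_i$ and not on $\eta$; the dependence on $\eta$ is entirely through the preimage $B_{im}(\eta) = \eta^{-1}(I_i)$ (or $\eta^{-1}(H^{-1}(I_i))$ in the link-function examples, where $H^{-1}(I_i)$ is again a fixed interval by monotonicity of $H$). So everything reduces to showing that $\eta \mapsto \mathrm{Leb}\bigl(A \cap \eta^{-1}(I_i)\bigr)$ is continuous.

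The key step is to control how the preimage $\eta^{-1}(I_i)$ changes under a small perturbation of $\eta$ in the relevant topology (sup-norm on the covariate space $\mathcal X$, which is the natural topology for the Gaussian-process and parametric examples). If $\|\eta' - \eta\|_\infty \le \delta$, then $\eta'^{-1}(I_i) \subseteq \eta^{-1}(I_i^{\delta})$ and $\eta^{-1}(I_i^{-\delta}) \subseteq \eta'^{-1}(I_i)$, where $I_i^{\delta}$ and $I_i^{-\delta}$ denote the $\delta$-enlargement and $\delta$-shrinkage of the interval $I_i$. Hence the symmetric difference $\eta'^{-1}(I_i)\,\triangle\,\eta^{-1}(I_i)$ is contained in $\eta^{-1}(I_i^{\delta} \setminus I_i^{-\delta})$, the set where $\eta$ takes values within $\delta$ of the two endpoints of $I_i$. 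It therefore suffices to argue that $\mathrm{Leb}\bigl(\eta^{-1}(I_i^{\delta}\setminus I_i^{-\delta})\bigr) \to 0$ as $\delta \downarrow 0$; this follows by continuity of measure from above, since $\bigcap_{\delta > 0}\eta^{-1}(I_i^{\delta}\setminus I_i^{-\delta}) = \eta^{-1}(\{\text{two endpoints}\})$, a Lebesgue-null set provided $\eta$ does not have a flat piece at either endpoint value. Combined with a lower bound $\mathrm{Leb}(B_{im}(\eta)) > 0$ (which is part of the standing assumptions ensuring the prior is well defined — the level set $\eta^{-1}(I_i)$ has positive measure), the ratio defining $\pi(\tilde x_i \in A \mid \eta)$ is then continuous in $\eta$, and the analogous argument at a fixed continuity point $x$ of the indicator gives continuity of the density.

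The main obstacle is the degenerate case where $\eta$ is constant on a set of positive Lebesgue measure equal to one of the endpoints of $I_i$, which would make $\mathrm{Leb}\bigl(\eta^{-1}(I_i^{\delta}\setminus I_i^{-\delta})\bigr)$ fail to vanish; in that event the probability $\pi(\tilde x_i \in A \mid \eta)$ can genuinely jump as $\eta$ crosses that level. For the Gaussian-process models such flat pieces occur with probability zero, and for the parametric examples (e.g. $\eta(x) = \theta x$ or strictly monotone $\eta$) they do not occur at all, so this exceptional set can be excluded; I would state the continuity either for all $\eta$ in the support of the prior on $\eta$, or simply note that the finitely many "bad" endpoint values form a null set and are irrelevant for the almost-sure statements used later. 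A secondary technicality is making sure the enlargement/shrinkage inclusions are stated correctly when $I_i$ is replaced by $H^{-1}(I_i)$: here one uses that $H$ is continuous and strictly monotone (assumed in examples (ii)–(iii)), so $H^{-1}$ is again continuous and maps the $\delta$-collar of $I_i$ into a collar of $H^{-1}(I_i)$ whose width tends to $0$ with $\delta$, and the same continuity-of-measure argument applies.
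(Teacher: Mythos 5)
Your proposal is correct and rests on the same basic idea as the paper's proof -- perturb $\eta$ in sup norm and show that the Lebesgue measures of the relevant sets (and hence the uniform-prior probabilities) move continuously -- but your elaboration is genuinely more careful than what the paper records. The paper's proof simply asserts that $\{x:\eta_k(x)\in A\}\cap B_{im}(\eta)$ converges to $\{x:\eta(x)\in A\}\cap B_{im}(\eta)$ and that the Lebesgue measures converge accordingly; it never exhibits the mechanism, works with $B_{im}(\eta)$ rather than the set $B_{im}(\eta_k)$ that actually defines the prior under $\eta_k$, and says nothing about the denominator. Your route makes all of this explicit: the ratio representation $\mathrm{Leb}(A\cap B_{im}(\eta))/\mathrm{Leb}(B_{im}(\eta))$, the inclusion of the symmetric difference $B_{im}(\eta')\,\triangle\,B_{im}(\eta)$ in the preimage of a $\delta$-collar of the endpoints of $I_i$, and continuity of measure from above (which uses finiteness of $\mathrm{Leb}$ on the covariate space, i.e.\ the compact support the paper invokes later anyway). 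Most importantly, you isolate the exact non-degeneracy condition under which the lemma as stated can fail -- $\eta$ having a level set of positive Lebesgue measure at an endpoint value of $I_i$ (or of $H^{-1}(I_i)$) -- which the paper's one-line set-convergence claim silently assumes away; your observation that this exceptional set is prior-null for Gaussian-process $\eta$ and vacuous for the strictly monotone parametric examples is the right way to repair the statement, and the same caveat (continuity of the density only at non-boundary points $x$) is handled correctly. In short, what your version buys is a quantitative and honest proof of the same lemma, at the cost of adding a mild regularity hypothesis that the paper leaves implicit.
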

\begin{proof}
Let $\left\{\eta_k:k=1,2,\ldots\right\}$ be a sequence of functions such that $\|\eta_k-\eta\|\rightarrow 0$, as $k\rightarrow\infty$,
	where $\|\cdot\|$ denotes the sup norm. It then follows that for any set $A$, 
	$$\left\{x:\eta_k(x)\in A\right\}\cap B_{im}(\eta)\rightarrow \left\{x:\eta(x)\in A\right\}\cap B_{im}(\eta),~\mbox{as}~k\rightarrow\infty.$$
Hence, as $k\rightarrow\infty$,	
	$$Leb\left(\left\{x:\eta_k(x)\in A\right\}\cap B_{im}(\eta)\right)\rightarrow Leb\left(\left\{x:\eta(x)\in A\right\}\cap B_{im}(\eta)\right),$$
where, for any set $A$, $Leb(A)$ denotes the Lebesgue measure of $A$.	
This proves the lemma.
\end{proof}
If the density of $\by_i$ given $\tilde x_i$ and $\theta$, which we denote by $f(\by_i|\theta,\tilde x_i)$,
is continuous in $\theta$ and $\Theta$ is bounded then it would follow from 
Lemma \ref{lemma:prior_continuity} and the dominated convergence theorem that $\pi(\tilde x_i|\theta,\by_i)$ and its associated probabilities are also continuous
in $\theta$. Below we formally present the result as Lemma \ref{lemma:post_continuity}.

\begin{lemma}
\label{lemma:post_continuity}
	If $f(\by_i|\theta,\tilde x_i)$ is continuous in $\theta$ and $\Theta$ is bounded, then the density $\pi(\tilde x_i|\theta,\by_i)$ 
	or any probability associated with $\pi(\tilde x_i|\theta,\by_i)$ is continuous with respect to $\theta$.
\end{lemma}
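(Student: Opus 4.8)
The plan is to deduce continuity of $\pi(\tilde x_i\mid\theta,\by_i)$ in $\theta$ from the continuity of the prior established in Lemma~\ref{lemma:prior_continuity} together with the assumed continuity of the likelihood $f(\by_i\mid\theta,\tilde x_i)$, via Bayes' formula and a dominated convergence argument. Write the posterior density as
\begin{equation*}
\pi(\tilde x_i\mid\theta,\by_i)=\frac{f(\by_i\mid\theta,\tilde x_i)\,\pi(\tilde x_i\mid\eta)}{\int f(\by_i\mid\theta,u)\,\pi(u\mid\eta)\,du},
\end{equation*}
recalling that the prior $\pi(\cdot\mid\eta)$ is the uniform density on $B_{im}(\eta)$, i.e.\ $\pi(u\mid\eta)=Leb(B_{im}(\eta))^{-1}\mathbf 1_{B_{im}(\eta)}(u)$, and that $\eta$ is a component of $\theta=(\eta,\xi)$, so continuity in $\theta$ subsumes joint continuity in $(\eta,\xi)$.

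First I would fix a sequence $\theta_k=(\eta_k,\xi_k)\to\theta=(\eta,\xi)$ (with $\|\eta_k-\eta\|\to0$ in sup norm) and show the numerator converges pointwise in $\tilde x_i$: $f(\by_i\mid\theta_k,\tilde x_i)\to f(\by_i\mid\theta,\tilde x_i)$ by the assumed continuity of the likelihood, and $\pi(\tilde x_i\mid\eta_k)\to\pi(\tilde x_i\mid\eta)$ for Lebesgue-a.e.\ $\tilde x_i$ by Lemma~\ref{lemma:prior_continuity} (the indicator converges off the boundary of $B_{im}(\eta)$, and the normalizing Lebesgue measures converge by the same lemma). Next I would handle the denominator: since $\Theta$ is bounded and $f$ is continuous on the bounded set $\Theta$, it is bounded there, so the integrands $f(\by_i\mid\theta_k,u)\pi(u\mid\eta_k)$ are dominated by an integrable function (a constant times a uniform density with normalizer bounded below, provided $Leb(B_{im}(\eta_k))$ stays bounded away from $0$), whence the dominated convergence theorem gives convergence of the integrals to $\int f(\by_i\mid\theta,u)\pi(u\mid\eta)\,du$. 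Dividing, $\pi(\tilde x_i\mid\theta_k,\by_i)\to\pi(\tilde x_i\mid\theta,\by_i)$; the same DCT argument applied to $\int_A\pi(\tilde x_i\mid\theta,\by_i)\,d\tilde x_i$ over an arbitrary set $A$ yields continuity of the associated probabilities, not merely the density.

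The main obstacle I anticipate is controlling the normalizing constants: one must ensure that $Leb(B_{im}(\eta))$ and the marginal $\int f(\by_i\mid\theta,u)\pi(u\mid\eta)\,du$ are bounded above and, crucially, bounded below away from zero uniformly along the sequence, so that no division-by-(nearly)-zero occurs and the dominating function is genuinely integrable; this is where boundedness of $\Theta$ and the structure of $B_{im}(\eta)$ (a nonempty set of positive Lebesgue measure whenever $\eta$ takes values covering the interval $[\bar y_i-cs_i/\sqrt m,\bar y_i+cs_i/\sqrt m]$) enter. A secondary technical point is that Lemma~\ref{lemma:prior_continuity} gives set convergence and convergence of Lebesgue measures but not automatically a.e.\ pointwise convergence of the normalized densities; one reconciles this by noting the symmetric-difference of $\{x:\eta_k(x)\in A\}\cap B_{im}(\eta)$ with its limit has Lebesgue measure tending to $0$, so convergence holds in $L^1$, which suffices to pass to the limit in both numerator integral and denominator. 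Once these normalizations are pinned down, the dominated convergence machinery is entirely routine.
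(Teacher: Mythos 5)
Your proposal is correct and follows essentially the same route as the paper, which offers no detailed proof of this lemma beyond remarking that it follows from Lemma \ref{lemma:prior_continuity} together with the dominated convergence theorem applied to the Bayes representation of $\pi(\tilde x_i|\theta,\by_i)$. Your write-up simply fills in the details of that sketch (pointwise convergence of the numerator, DCT for the normalizing integral, and the caveat that boundedness of $\Theta$ is what furnishes the dominating bound), so there is no substantive divergence from the paper's argument.
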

However, we usually can not assume a compact parameter space. For example, such compactness assumption is invalid for Gaussian process priors for $\theta$. But in most
situations, continuity of the density of $\pi(\tilde x_i|\theta,\by_i)$ and its associated probabilities with respect to $\theta$ hold even without the compactness
assumption, provided $f(\by_i|\theta,\tilde x_i)$ is continuous in $\theta$. We thus make the following realistic assumption:
\begin{as}
$\pi(\tilde x_i|\theta,\by_i)$ is continuous in $\theta$.
\label{as:as1}
\end{as}
The following result holds due to Assumption \ref{as:as1} and Scheffe's theorem (see, for example, \ctn{Schervish95}).
\begin{lemma}
\label{lemma:postprob_continuity}
	If Assumption \ref{as:as1} holds, then any probability associated with $\pi(\tilde x_i|\theta,\by_i)$ is continuous in $\theta$.
\end{lemma}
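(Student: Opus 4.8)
The plan is to deduce Lemma \ref{lemma:postprob_continuity} from Assumption \ref{as:as1} by combining pointwise convergence of densities with Scheffé's theorem, and then passing from convergence in $L^1$ to convergence of arbitrary probabilities. Concretely, fix a sequence $\theta_k\to\theta$ in $\Theta$. By Assumption \ref{as:as1}, the densities satisfy $\pi(\tilde x_i\mid\theta_k,\by_i)\to\pi(\tilde x_i\mid\theta,\by_i)$ pointwise in $\tilde x_i$ (for Lebesgue-almost every $\tilde x_i$). Each of these is a genuine probability density with respect to a common dominating measure, namely Lebesgue measure on $\mathcal X$. Scheffé's theorem then upgrades this pointwise convergence of densities to convergence in $L^1$, i.e.
\begin{equation*}
\int_{\mathcal X}\bigl|\pi(\tilde x_i\mid\theta_k,\by_i)-\pi(\tilde x_i\mid\theta,\by_i)\bigr|\,d\tilde x_i\longrightarrow 0,\qquad k\to\infty.
\end{equation*}

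From this $L^1$ convergence the result follows immediately: for any measurable set $A$,
\begin{equation*}
\bigl|\pi(\tilde x_i\in A\mid\theta_k,\by_i)-\pi(\tilde x_i\in A\mid\theta,\by_i)\bigr|
\le\int_{A}\bigl|\pi(\tilde x_i\mid\theta_k,\by_i)-\pi(\tilde x_i\mid\theta,\by_i)\bigr|\,d\tilde x_i
\le\int_{\mathcal X}\bigl|\pi(\tilde x_i\mid\theta_k,\by_i)-\pi(\tilde x_i\mid\theta,\by_i)\bigr|\,d\tilde x_i,
\end{equation*}
and the right-hand side tends to $0$. Since the sequence $\theta_k\to\theta$ was arbitrary, $\theta\mapsto\pi(\tilde x_i\in A\mid\theta,\by_i)$ is continuous, uniformly in $A$, which is exactly the assertion of the lemma.

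The only subtlety I anticipate — and it is mild — is bookkeeping about what "continuous in $\theta$" means in Assumption \ref{as:as1}: one should read it as saying that for every convergent sequence $\theta_k\to\theta$ the densities converge pointwise (a.e.\ in $\tilde x_i$), so that Scheffé applies. I would state this interpretation explicitly at the start of the proof. One also wants to note that the relevant dominating measure is Lebesgue measure restricted to $B_{im}(\theta)$ — but since $B_{im}(\theta)$ is itself determined by $\theta$ (it depends on $\eta$), it is cleanest to regard all densities as densities on the full covariate space $\mathcal X$ (with the density simply vanishing outside $B_{im}(\theta)$), so that a single fixed dominating measure works and Scheffé's theorem is applicable without modification. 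With that framing the argument is essentially a one-line invocation of Scheffé followed by the elementary bound above; I do not expect any genuine obstacle.
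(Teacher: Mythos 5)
Your argument is correct and is essentially the paper's own: the paper justifies this lemma in one line by invoking Assumption \ref{as:as1} together with Scheff\'e's theorem, which is exactly the pointwise-convergence-to-$L^1$-convergence step you spell out, followed by the elementary bound of the difference of probabilities by the $L^1$ distance. Your added care about viewing all densities on the full covariate space with a single dominating measure (since $B_{im}(\theta)$ varies with $\theta$) is a sensible clarification but does not change the route.
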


\section{Consistency of the LOO-CV posteriors}
\label{sec:consistency_loo_cv}

For consistency of the LOO-CV posteriors given by (\ref{eq:loo_cv}), we first need to ensure weak convergence of $\pi(\theta|\bX_{n,-i},\bY_{nm})$ almost surely 
to $\delta_{\theta_0}$, as $n\rightarrow\infty$, for $m\geq 1$. This holds if and only if $\pi(\theta|\bX_n,\bY_{nm})$ is consistent at $\theta_0$. This can be seen by
noting that the $i$-th factor of $\log R_n(\theta)$, obtained by integrating out $\tilde x_i$, does not play any role 
in by (\ref{eq:equipartition}) and (\ref{eq:S3}), so that these limits remain the same as in the case of $\pi(\theta|\bX_n,\bY_{nm})$. The other conditions of Shalizi
also remain the same for both the posteriors $\pi(\theta|\bX_n,\bY_{nm})$ and $\pi(\theta|\bX_{n,-i},\bY_{nm})$.

Hence, assuming that conditions (S1)--(S7) of Shalizi are verified for $\pi(\theta|\bX_n,\bY_{nm})$, for fixed $m$, it follows that 
$\pi(\cdot|\bX_{n,-i},\bY_{nm})\stackrel{w}{\longrightarrow}\delta_{\theta_0}(\cdot)$, almost surely, as $n\rightarrow\infty$.

For any neighborhood $U_i$ of $x_i$, note that the probability $\pi(\tilde x_i\in U^c_i|\theta,\by_i)$ is continuous in $\theta$ 
due to Lemma \ref{lemma:postprob_continuity}.
Moreover, since it is a probability, it is bounded. Hence, by the Portmanteau theorem, using (\ref{eq:loo_cv}) and consistency of 
$\pi\left(\theta|\bX_{n,-i},\bY_{nm}\right)$ it holds almost surely that
\begin{align}
	\pi(\tilde x_i\in U^c_i|\bX_{n,-i},\bY_{nm})&=\int_{\Theta}\pi(\tilde x_i\in U^c_i|\theta,\by_i)d\pi(\theta|\bX_{n,-i},\bY_{nm})\notag\\
	&\stackrel{a.s.}{\longrightarrow}
	\pi(\tilde x_i\in U^c_i|\theta_0,\by_i),~\mbox{as}~n\rightarrow\infty,~\mbox{for any}~m\geq 1.
\label{eq:loo_cons1}
\end{align}
We formalize this result as the following theorem.
\begin{theorem}
	\label{theorem:loo_cons1}
	Assume conditions (S1)--(S7) of Shalizi. Then for $i\geq 1$, under the prior (\ref{eq:prior_x}) and 
	Assumption \ref{as:as1}, (\ref{eq:loo_cons1}) holds almost surely, for any $m\geq 1$,
	for any neighborhood $U_i$ of $x_i$.
\end{theorem}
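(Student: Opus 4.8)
The plan is to reduce the statement to weak convergence of the LOO-CV posterior on $\theta$ together with the Portmanteau theorem applied to a bounded continuous integrand. First I would argue that Shalizi's conclusion transfers from $\pi(\theta|\bX_n,\bY_{nm})$ to $\pi(\theta|\bX_{n,-i},\bY_{nm})$. The likelihood feeding the latter posterior differs from that feeding the former only in the $i$-th block: the factor $\prod_{j=1}^m f_\theta(y_{ij}|x_i)$ is replaced by $\int f_\theta(\by_i|\tilde x_i)\,d\pi(\tilde x_i|\eta)$, i.e. $\tilde x_i$ is integrated out against the prior (\ref{eq:prior_x}). For fixed $m$ this is a single $O(1)$ factor, so $\frac{1}{n}\log R_n(\theta)$ is altered by an $O(1/n)$ term; hence the equipartition limit (\ref{eq:equipartition}) and the KL-divergence rate (\ref{eq:S3}) are unchanged, and the sieve construction, the integrability/domination bounds, and the measurability requirements of $(S1)$--$(S7)$ — all being tail/entropy statements insensitive to one block — remain valid. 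Invoking (\ref{eq:post_conv1}) together with Lemma \ref{lemma:lemma1} then yields $\pi(\cdot|\bX_{n,-i},\bY_{nm})\stackrel{w}{\longrightarrow}\delta_{\theta_0}(\cdot)$ almost surely as $n\rightarrow\infty$, for each fixed $m\geq 1$.

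Next, fix $m\geq 1$, $i\geq 1$, and a neighborhood $U_i$ of $x_i$, and set $g(\theta)=\pi(\tilde x_i\in U^c_i|\theta,\by_i)$. By Assumption \ref{as:as1} and Lemma \ref{lemma:postprob_continuity}, $g$ is continuous on $\Theta$; being a probability it satisfies $0\leq g(\theta)\leq 1$, so $g$ is bounded continuous. Applying the Portmanteau theorem to the almost sure weak convergence $\pi(\cdot|\bX_{n,-i},\bY_{nm})\to\delta_{\theta_0}$ gives $\int_{\Theta} g(\theta)\,d\pi(\theta|\bX_{n,-i},\bY_{nm})\to g(\theta_0)$ almost surely. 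By the representation (\ref{eq:loo_cv}) the left-hand side is exactly $\pi(\tilde x_i\in U^c_i|\bX_{n,-i},\bY_{nm})$ and the right-hand side is $\pi(\tilde x_i\in U^c_i|\theta_0,\by_i)$, which is (\ref{eq:loo_cons1}). Since the argument runs on a single almost sure event for each $(i,m)$, and $m\geq 1$, $i\geq 1$, and $U_i$ were arbitrary, the theorem follows.

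The main obstacle I anticipate is the first step: checking rigorously that the LOO modification leaves all seven of Shalizi's hypotheses intact. The equipartition limit and the KL-rate are essentially immediate (a single summand is asymptotically negligible), but the conditions involving the sieves $\mathcal G_n$, the uniform domination/integrability bounds, and the measurability of $\theta\mapsto\int f_\theta(\by_i|\tilde x_i)\,d\pi(\tilde x_i|\eta)$ must be re-verified for the modified likelihood, and one must ensure the modified $i$-th factor is well-defined and controlled (this is where the explicit structure of the prior (\ref{eq:prior_x}) and the continuity results of Section \ref{sec:prior} enter). A secondary, minor point is that $g(\theta)=\pi(\tilde x_i\in U^c_i|\theta,\by_i)$ must be defined for \emph{every} $\theta$, not merely $\pi$-almost every $\theta$, so that the Portmanteau limit can be identified with the value of $g$ at the single point $\theta_0$; Assumption \ref{as:as1} is precisely what supplies this.
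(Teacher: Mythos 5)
Your proposal is correct and follows essentially the same route as the paper: transfer Shalizi's conditions to the leave-one-out posterior of $\theta$ by noting the modified $i$-th factor is asymptotically negligible in $\frac{1}{n}\log R_n(\theta)$, conclude weak convergence to $\delta_{\theta_0}$ via Lemma \ref{lemma:lemma1}, and then apply the Portmanteau theorem to the bounded continuous map $\theta\mapsto\pi(\tilde x_i\in U^c_i|\theta,\by_i)$ guaranteed by Assumption \ref{as:as1} and Lemma \ref{lemma:postprob_continuity}. Your added caution about re-verifying the sieve, measurability, and domination conditions for the modified likelihood is reasonable but is treated in the paper exactly as you treat it, by asserting the remaining conditions are unaffected by a single block.
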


Let us now make the following extra assumptions: 
\begin{as}
$f(\by_i|\theta_0,\tilde x_i)$ is continuous in $\tilde x_i$.
\label{as:as2}
\end{as}

\begin{as}
$\eta_0$ is a one-to-one function.
\label{as:as3}
\end{as}

With these assumptions, we have the following result.
\begin{theorem}
\label{theorem:theorem1}
	Under the prior (\ref{eq:prior_x}) and Assumptions \ref{as:as2} and \ref{as:as3}, for any neighborhood $U_i$ of $x_i$, for any $i\geq 1$,
\begin{equation}
	\pi(\tilde x_i\in U^c_i|\theta_0,\by_i)\stackrel{a.s.}{\longrightarrow}0,~\mbox{as}~m\rightarrow\infty.
	\label{eq:conv2}
\end{equation}
\end{theorem}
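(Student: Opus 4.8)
The plan is to show that the posterior $\pi(\tilde x_i \mid \theta_0, \by_i)$ concentrates, as $m \to \infty$, on the single point $x_i$, which will force $\pi(\tilde x_i \in U_i^c \mid \theta_0, \by_i) \to 0$ for every neighborhood $U_i$ of $x_i$. The key observation is that $\pi(\tilde x_i \mid \theta_0, \by_i)$ is a truncation of the prior $\pi(\tilde x_i \mid \eta_0)$, which is itself the uniform distribution on the random set $B_{im}(\theta_0)$. So I would first analyze the prior set $B_{im}(\theta_0)$ directly: by the law of large numbers, $\bar y_i \to E_{\theta_0}(y_{i1} \mid x_i) =: \mu_0(x_i)$ almost surely and $s_i^2$ converges to the corresponding variance, hence $cs_i/\sqrt m \to 0$. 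In all the motivating examples the conditional mean $\mu_0(x)$ is a monotone transform of $\eta_0(x)$ (e.g. $\mu_0(x) = H(\eta_0(x))$ with $H$ one-to-one, or $\mu_0(x)=\eta_0(x)$ directly), so the interval $[\bar y_i - cs_i/\sqrt m, \bar y_i + cs_i/\sqrt m]$ shrinks to the point $\eta_0(x_i)$. Using Assumption \ref{as:as3} that $\eta_0$ is one-to-one, the set $B_{im}(\theta_0) = \{x : \eta_0(x) \in [\bar y_i \pm cs_i/\sqrt m]\}$ shrinks (a.s.) to $\{x_i\}$: concretely, for any neighborhood $U_i$ of $x_i$, for $m$ large enough $B_{im}(\theta_0) \subseteq U_i$, whence the uniform prior $\pi(\tilde x_i \mid \eta_0)$ already assigns probability $0$ to $U_i^c$ eventually.

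Next I would pass from the prior to the posterior. Since $\by_i$ enters $B_{im}(\theta_0)$ through $\bar y_i, s_i^2$, the posterior $\pi(\tilde x_i \mid \theta_0, \by_i) \propto f(\by_i \mid \theta_0, \tilde x_i)\, \mathbf{1}\{\tilde x_i \in B_{im}(\theta_0)\}$ is supported on the same shrinking set $B_{im}(\theta_0)$. Therefore $\pi(\tilde x_i \in U_i^c \mid \theta_0, \by_i) \le \mathbf{1}\{B_{im}(\theta_0) \cap U_i^c \ne \emptyset\}$, and the right-hand side is $0$ for all sufficiently large $m$ on the almost-sure event where $\bar y_i \to \mu_0(x_i)$ and $s_i \to$ its limit. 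Assumption \ref{as:as2} (continuity of $f(\by_i \mid \theta_0, \tilde x_i)$ in $\tilde x_i$) guarantees the posterior is well-defined and that the normalizing constant $\int_{B_{im}(\theta_0)} f(\by_i \mid \theta_0, u)\,du$ is positive (it contains $x_i$, where the likelihood is positive a.s.), so no degeneracy occurs. Combining, $\pi(\tilde x_i \in U_i^c \mid \theta_0, \by_i) \to 0$ almost surely as $m \to \infty$, which is \eqref{eq:conv2}.

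The main obstacle I anticipate is handling the general abstract model \eqref{model}, where the only structural hypotheses are Assumptions \ref{as:as2} and \ref{as:as3}, without explicitly assuming that the conditional mean of $y_{ij}$ is a continuous monotone function of $\eta_0(x_i)$. One needs some link between $\bar y_i$ (which concentrates at $E_{\theta_0}(y_{i1}\mid x_i)$) and the location of the interval $[\bar y_i \pm cs_i/\sqrt m]$ relative to $\eta_0(x_i)$; if $E_{\theta_0}(y_{i1}\mid x_i)$ does not equal (a known transform of) $\eta_0(x_i)$, the set $B_{im}(\theta_0)$ could shrink to a \emph{wrong} point or be empty, and the argument collapses. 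I would resolve this by invoking the implicit modeling convention used throughout the examples in Sections \ref{subsec:model_examples} and \ref{subsec:illustrations_prior} — namely that $\eta$ parametrizes the conditional mean (or a one-to-one transform thereof) — stating it as a standing assumption if it is not already implicit, so that $\bar y_i \to H(\eta_0(x_i))$ for the relevant link $H$ and the argument above goes through. A secondary technical point, that $\tfrac{1}{m}\sum_j(y_{ij}-\bar y_i)^2$ has a finite almost-sure limit, requires a finite-second-moment assumption on $f_{\theta_0}(x_i)$, which holds in all the examples considered and which I would note explicitly; the precise rate of shrinkage of $cs_i/\sqrt m$ is immaterial, only that it tends to $0$.
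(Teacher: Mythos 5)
Your proposal is correct and follows essentially the same route as the paper: both arguments rest on the fact that, by the strong law of large numbers and the one-to-one Assumption \ref{as:as3}, the prior support $B_{im}(\theta_0)$ shrinks almost surely to $\{x_i\}$, so that $U_i^c\cap B_{im}(\theta_0)=\emptyset$ for all large $m$ and the posterior mass on $U_i^c$ vanishes. The only cosmetic differences are that the paper routes this through the numerator--denominator decomposition of the posterior probability, bounding the numerator by a constant (via Assumption \ref{as:as2} and compact support) times the prior probability of $U_i^c$, whereas you bound the posterior directly by the indicator that the support meets $U_i^c$; you are also more explicit than the paper about the SLLN step and the implicit convention that $\bar y_i$ concentrates at a known one-to-one transform of $\eta_0(x_i)$.
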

\begin{proof}
Note that
\begin{align}
\pi(\tilde x_i\in U^c_i|\theta_0,\by_i)&
=\frac{\int_{U^c_i}\pi(\tilde x_i|\theta_0)f(\by_i|\theta_0,\tilde x_i)d\tilde x_i}
	{\int_{U^c_i}\pi(\tilde x_i|\theta_0)f(\by_i|\theta_0,\tilde x_i)d\tilde x_i+\int_{U_i}\pi(\tilde x_i|\theta_0)f(\by_i|\theta_0,\tilde x_i)d\tilde x_i}.
	\label{eq:conv3}
\end{align}
Let us consider $\int_{U^c_i}\pi(\tilde x_i|\theta_0)f(\by_i|\theta_0,\tilde x_i)d\tilde x_i$ of (\ref{eq:conv3}). 
Since the support of $\tilde x_i$ is compact, Assumption \ref{as:as2} ensures that $f(\by_i|\theta_0,\tilde x_i)$ is bounded. 
Hence, 
\begin{align}
\int_{U^c_i}\pi(\tilde x_i|\theta_0)f(\by_i|\theta_0,\tilde x_i)d\tilde x_i\leq
K\int_{U^c_i}\pi(\tilde x_i|\theta_0)d\tilde x_i=K\pi(\tilde x_i\in U^c_i|\theta_0), 
\label{eq:conv4}
\end{align}	
	for some positive constant $K$. Now note that $\pi(\tilde x_i\in U^c_i|\theta_0)=\pi(\tilde x_i\in U^c_i\cap B_{im}(\theta_0)|\theta_0)$, and 
	Assumption \ref{as:as3} ensures that $B_{im}(\theta_0)\rightarrow\{x_i\}$ almost surely, as $m\rightarrow\infty$, for all $i\geq 1$.
	It follows that there exists $m_0\geq 1$ such that $U^c_i\cap B_{im}(\theta_0)=\emptyset$, for $m\geq m_0$. Hence, 
	$\pi(\tilde x_i\in U^c_i\cap B_{im}(\theta_0)|\theta_0)\rightarrow 0$, as
$m\rightarrow\infty$. This implies, in conjunction with (\ref{eq:conv4}) and (\ref{eq:conv3}), that (\ref{eq:conv2}) holds.

\end{proof}
Combining Theorems \ref{theorem:loo_cons1} and \ref{theorem:theorem1} yields the following main result.
\begin{theorem}
\label{theorem:theorem2}
	Assume conditions (S1)--(S7) of Shalizi. Then with the prior (\ref{eq:prior_x}), under further Assumptions \ref{as:as1} -- \ref{as:as3}, for $i\geq 1$,
	\begin{equation}
		\underset{m\rightarrow\infty}{\lim}\underset{n\rightarrow\infty}{\lim}~\pi(\tilde x_i\in U^c_i|\bX_{n,-i},\bY_{nm})=0,~\mbox{almost surely},
	\end{equation}
for any neighborhood $U_i$ of $x_i$.
\end{theorem}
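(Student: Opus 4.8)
The plan is to chain Theorems \ref{theorem:loo_cons1} and \ref{theorem:theorem1} through the iterated limit, the only genuine work being to consolidate the exceptional null sets into a single almost-sure event. Throughout I would work on the probability space carrying the infinite array $\{y_{ij}:i\geq 1,\ j\geq 1\}$ generated under the true parameter $\theta_0$, and note at the outset that the hypotheses assumed in the statement — conditions (S1)--(S7), the prior (\ref{eq:prior_x}), and Assumptions \ref{as:as1}--\ref{as:as3} — are precisely those needed to apply the two preceding theorems.

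First, I would fix $i\geq 1$ and a neighborhood $U_i$ of $x_i$, and for each fixed $m\geq 1$ invoke Theorem \ref{theorem:loo_cons1}: there is a null set $N_m$ (with respect to the true data-generating distribution) off which
\[
\lim_{n\to\infty}\pi(\tilde x_i\in U^c_i|\bX_{n,-i},\bY_{nm})=\pi(\tilde x_i\in U^c_i|\theta_0,\by_i).
\]
Consequently, outside the null set $N:=\bigcup_{m\geq 1}N_m$ (a countable union of null sets, hence null), the inner limit over $n$ exists and equals $\pi(\tilde x_i\in U^c_i|\theta_0,\by_i)$ for every $m$ simultaneously.

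Next, I would invoke Theorem \ref{theorem:theorem1}: there is a null set $N'$ off which
\[
\lim_{m\to\infty}\pi(\tilde x_i\in U^c_i|\theta_0,\by_i)=0.
\]
On the complement of $N\cup N'$ — still a null set — the inner $n$-limit equals $\pi(\tilde x_i\in U^c_i|\theta_0,\by_i)$ for each $m$, and the $m$-limit of this latter quantity is $0$; hence
\[
\lim_{m\to\infty}\lim_{n\to\infty}\pi(\tilde x_i\in U^c_i|\bX_{n,-i},\bY_{nm})=0,
\]
which is exactly the claimed almost-sure convergence. Since $i$ and $U_i$ were arbitrary, this holds for each $i\geq 1$ (a statement simultaneous over all $i$ would follow by one further countable union).

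The argument is essentially bookkeeping, so there is no analytic obstacle: the substantive content has already been discharged in the two theorems — the Portmanteau/continuity argument behind Theorem \ref{theorem:loo_cons1} and the shrinking-support fact $B_{im}(\theta_0)\to\{x_i\}$ behind Theorem \ref{theorem:theorem1}. The one point that needs a little care is that the two limits are taken in a fixed order and are not interchanged; each application of the two theorems carries its own exceptional null set, and one must pass to the countable union over $m$ before sending the outer limit to zero, so that the whole iterated limit holds on a single common almost-sure event.
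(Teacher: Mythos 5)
Your proposal is correct and follows essentially the same route as the paper, which proves Theorem \ref{theorem:theorem2} simply by combining Theorems \ref{theorem:loo_cons1} and \ref{theorem:theorem1}; your only addition is to make explicit the countable union of the exceptional null sets over $m$, a bookkeeping step the paper leaves implicit.
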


\section{Consistency of the IRD approach}
\label{sec:consistency_IRD}
Due to practical usefulness, we consider consistency of IRD associated with (\ref{eq:discrepancy1}) -- (\ref{eq:discrepancy3}). Among these, the IRD associated with $T_3$
is just the $i$-th LOO-CV posterior, which is consistent by Theorem \ref{theorem:theorem2}. For $T_1$ and $T_2$, we consider slight modification by
dividing the right hand sides of (\ref{eq:discrepancy1}) and (\ref{eq:discrepancy2}) by $n$, and adding some small quantity $\varepsilon>0$ to $V_{\pi}(\tilde x_i)$.
These adjustments are not significant for practical applications, but seems to be necessary for our asymptotic theory.
With these, we provide the consistency result and its for the IRD corresponding to $T_1$; that corresponding to $T_2$ would follow in the same way.

\begin{theorem}
\label{theorem:consistency_T1}
Assume conditions (S1)--(S7) of Shalizi, and the prior (\ref{eq:prior_x}). Also let Assumptions \ref{as:as1} -- \ref{as:as3} hold, for $i\geq 1$,
Define for some $\varepsilon>0$, the following: 
$$T_1(\tilde\bX_n) =\frac{1}{n} \sum_{i=1}^n\frac{(\tilde x_i - E_{\pi}(\tilde x_i))^2}{V_{\pi}(\tilde x_i)+\varepsilon}$$
and $$T_1(\bX_n) =\frac{1}{n} \sum_{i=1}^n\frac{(x_i - E_{\pi}(\tilde x_i))^2}{V_{\pi}(\tilde x_i)+\varepsilon}.$$
Then 
\begin{equation}	
	\left|T_1(\tilde\bX_n)-T_1(\bX_n)\right|\stackrel{P}{\longrightarrow}0,~\mbox{as}~m\rightarrow\infty,~n\rightarrow\infty,~\mbox{almost surely}.
	\label{eq:consistency_T1}
\end{equation}
In the above, $``\stackrel{P}{\longrightarrow}"$ denotes convergence in probability.
\end{theorem}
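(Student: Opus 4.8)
The plan is to reduce the claim to a single statement about the average Bayes risk of the leave-one-out predictors and then invoke Theorem~\ref{theorem:theorem2}. Throughout write $\mu_i=E_{\pi}(\tilde x_i)$ and $v_i=V_{\pi}(\tilde x_i)$ for the mean and variance of the $i$-th LOO-CV posterior $\pi(\tilde x_i|\bX_{n,-i},\bY_{nm})$; these depend on the data only. Since $T_1(\tilde\bX_n)$ and $T_1(\bX_n)$ are both nonnegative, $\left|T_1(\tilde\bX_n)-T_1(\bX_n)\right|\le T_1(\tilde\bX_n)+T_1(\bX_n)$, so it suffices to show that $T_1(\bX_n)\to 0$ (deterministically, for almost every data sequence) and that $T_1(\tilde\bX_n)\to 0$ in probability under the IRD. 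Using $v_i+\varepsilon\ge\varepsilon$ and the bias--variance identity $E_{\pi}\big[(\tilde x_i-x_i)^2\big]=v_i+(x_i-\mu_i)^2$, one has
\[
T_1(\bX_n)\le\frac{1}{\varepsilon}\cdot\frac{1}{n}\sum_{i=1}^n (x_i-\mu_i)^2,
\qquad
E_{\pi}\big[T_1(\tilde\bX_n)\big]=\frac{1}{n}\sum_{i=1}^n\frac{v_i}{v_i+\varepsilon}\le\frac{1}{\varepsilon}\cdot\frac{1}{n}\sum_{i=1}^n v_i,
\]
and, since each of $(x_i-\mu_i)^2$ and $v_i$ is dominated by $E_{\pi}\big[(\tilde x_i-x_i)^2\big]$, both right-hand sides are at most $\varepsilon^{-1}W_{n,m}$, where $W_{n,m}:=\frac1n\sum_{i=1}^n E_{\pi}\big[(\tilde x_i-x_i)^2\big]$. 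Thus the whole theorem follows once $W_{n,m}\to 0$ almost surely (in the relevant $m,n$ limit): the IRD-convergence of $T_1(\tilde\bX_n)$ is then Markov's inequality applied to the nonnegative random variable $T_1(\tilde\bX_n)$ with mean $\to 0$.

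Next I would bound $W_{n,m}$ by splitting each posterior expectation over a neighborhood and its complement. Fix $\delta>0$ and let $U_i=\{x:\|x-x_i\|<\delta\}$. Then
\[
E_{\pi}\big[(\tilde x_i-x_i)^2\big]\le\delta^2+\int_{U_i^c}\|\tilde x_i-x_i\|^2\,d\pi(\tilde x_i|\bX_{n,-i},\bY_{nm}).
\]
Under the (mild, and in the examples automatically satisfied) assumption that the covariate space $\mathcal X$ — or at least the effective support of the LOO-CV posteriors — is bounded with diameter $D$, the second term is at most $D^2\,\pi(\tilde x_i\in U_i^c|\bX_{n,-i},\bY_{nm})$, so
\[
W_{n,m}\le\delta^2+D^2\cdot\frac{1}{n}\sum_{i=1}^n\pi(\tilde x_i\in U_i^c|\bX_{n,-i},\bY_{nm}).
\]
By Theorem~\ref{theorem:theorem2} each summand tends to $0$ in the iterated limit; since $\delta$ is arbitrary, it remains only to pass this per-index decay to the Ces\`aro average.

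I would carry out that passage by taking the inner limit $n\to\infty$ first. By Theorem~\ref{theorem:loo_cons1}, for each fixed $i$ and $m$, $\pi(\tilde x_i\in U_i^c|\bX_{n,-i},\bY_{nm})\to\pi(\tilde x_i\in U_i^c|\theta_0,\by_i)$ almost surely as $n\to\infty$; under a regular (e.g.\ i.i.d.) design for the pairs $(x_i,\by_i)$ the strong law then gives $\frac1n\sum_{i=1}^n\pi(\tilde x_i\in U_i^c|\theta_0,\by_i)\to E\big[\pi(\tilde x_1\in U_1^c|\theta_0,\by_1)\big]$. Finally, by Theorem~\ref{theorem:theorem1} the integrand tends to $0$ almost surely as $m\to\infty$ and is bounded by $1$, so dominated convergence yields $E\big[\pi(\tilde x_1\in U_1^c|\theta_0,\by_1)\big]\to 0$. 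Combining, $\limsup_{m\to\infty}\limsup_{n\to\infty}W_{n,m}\le\delta^2$ for every $\delta>0$, hence $W_{n,m}\to 0$, and the theorem follows as described in the first paragraph.

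The main obstacle is precisely this last step: per-index consistency of the LOO-CV posteriors (Theorem~\ref{theorem:theorem2}) does \emph{not} by itself force their Ces\`aro average to decay, since $\pi(\tilde x_i\in U_i^c|\bX_{n,-i},\bY_{nm})$ forms a triangular array in $i$ and $n$. Making the argument rigorous therefore needs an extra ingredient that controls the array uniformly enough in $i$ — e.g.\ boundedness of the covariate space together with regularity of the design, which is what allows the ``first $n\to\infty$, then law of large numbers, then dominated convergence in $m$'' route above. A secondary, more routine point is the uniform boundedness used to replace $\int_{U_i^c}\|\tilde x_i-x_i\|^2\,d\pi$ by $D^2\pi(\tilde x_i\in U_i^c|\cdot)$: immediate when $\mathcal X$ is bounded, and in the unbounded examples (such as Poisson with mean $\theta x_i$) it follows once the posterior for $\theta$ concentrates near $\theta_0$, since then the sets $B_{im}(\theta)$ are uniformly bounded with high posterior probability.
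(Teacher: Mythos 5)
Your reductions are fine as far as they go, and they follow a genuinely different route from the paper: you bound $\left|T_1(\tilde\bX_n)-T_1(\bX_n)\right|$ by $T_1(\tilde\bX_n)+T_1(\bX_n)$, control both via the bias--variance identity and Markov's inequality, and reduce everything to the average posterior mean-squared error $W_{n,m}$. The paper instead writes the difference as the average of the per-index terms $z^{nm}_i$, shows each $z^{nm}_i\rightarrow 0$ in probability (using that the prior (\ref{eq:prior_x}) forces the LOO-CV posterior of $\tilde x_i$ to have bounded support, so that $E_{\pi}(\tilde x_i)\rightarrow x_i$ and $V_{\pi}(\tilde x_i)\rightarrow 0$ follow from weak convergence plus uniform integrability), and then passes from the individual terms to the average by invoking Theorem 7.15 of Schervish (the subsequence characterization of convergence in probability). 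One consequence of this difference is that the boundedness you introduce as an extra hypothesis (bounded covariate space, or bounded effective posterior support, with diameter $D$) is in the paper obtained from the structure of the prior itself, since $\tilde x_i$ is supported in $B_{im}(\theta)$; you did not need to posit bounded $\mathcal X$.

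The genuine gap is the step you yourself flag as the main obstacle, and it is not resolved by the remedies you offer. The theorem is stated under (S1)--(S7), the prior (\ref{eq:prior_x}) and Assumptions \ref{as:as1}--\ref{as:as3} only; your argument additionally requires an i.i.d.\ (or otherwise ``regular'') design for $(x_i,\by_i)$, which is nowhere assumed. More importantly, even granting that, your chain ``first $n\rightarrow\infty$, then SLLN, then dominated convergence in $m$'' operates on the wrong array: Theorem \ref{theorem:loo_cons1} gives $\pi(\tilde x_i\in U_i^c|\bX_{n,-i},\bY_{nm})\rightarrow\pi(\tilde x_i\in U_i^c|\theta_0,\by_i)$ for each \emph{fixed} $i$ as $n\rightarrow\infty$, but the quantity you must control is the Ces\`aro average over $i=1,\ldots,n$ of the pre-limit terms, a triangular array; replacing it by the average of the limits requires uniformity in $i$ of the posterior-consistency/Portmanteau step, and boundedness of $\mathcal X$ plus an i.i.d.\ design does not supply that uniformity. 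So the law of large numbers and dominated convergence you apply to $\frac{1}{n}\sum_{i=1}^n\pi(\tilde x_i\in U_i^c|\theta_0,\by_i)$ never get connected to $W_{n,m}$. This is exactly the point where the paper's proof brings in Schervish's Theorem 7.15 together with the bounded-support moment convergence; without either that device or a proven uniform-in-$i$ control of the array, your argument does not close.
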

\begin{proof}
The assumptions of this theorem ensures consistency of the LOO-CV posteriors due to Theorem \ref{theorem:theorem2}. This again is equivalent to
almost sure weak convergence of the $i$-th cross-validation posterior to $\delta_{\{x_i\}}$, for $i\geq 1$. This is again equivalent to convergence
in (cross-validation posterior) distribution of $\tilde x_i$, to the degenerate quantity $x_i$, almost surely. Due to degeneracy, this is again equivalent to convergence
in probability, almost surely.

For notational clarity we denote $\tilde x_i$ by $\tilde x^{nm}_i$, whose LOO-CV posterior is $\pi(\cdot|\bX_{n,-i},\bY_{nm})$.  
Let also $\tilde\bX^{nm}=\{\tilde x^{nm}_1,\ldots,\tilde x^{nm}_n\}$, so that we now denote $T_1(\tilde\bX_n)$ by $T_1(\tilde\bX^{nm})$.
It follows from the above arguments that for $i\geq 1$, 
\begin{equation}
\tilde x^{nm}_i\stackrel{P}{\longrightarrow}\tilde x_i,~\mbox{almost surely},~\mbox{as}~m\rightarrow\infty,~n\rightarrow\infty.
\label{eq:cons_T1_2}
\end{equation}
Now consider $T_1(\tilde\bX^{nm})-T_1(\bX_n)$, which is an average of $n$ terms, the $i$-th term being
\begin{equation}
	z^{nm}_i=\frac{(\tilde x^{nm}_i - E_{\pi}(\tilde x^{nm}_i))^2-(x_i - E_{\pi}(\tilde x^{nm}_i))^2}{V_{\pi}(\tilde x^{nm}_i)+\varepsilon}.
\label{eq:cons_T1_3}
\end{equation}
Due to bounded support of $\tilde x^{nm}_i$ and (\ref{eq:cons_T1_2}), uniform integrability entails $E_{\pi}(\tilde x_i)\rightarrow x_i$
and $V_{\pi}(\tilde x_i)\rightarrow 0$, almost surely. The latter two results ensure, along with (\ref{eq:cons_T1_2}), that for $i\geq 1$,
\begin{equation}
z^{nm}_i\stackrel{P}{\longrightarrow}0,~\mbox{as}~m\rightarrow\infty,~n\rightarrow\infty,~\mbox{almost surely}.
\label{eq:cons_T1_4}
\end{equation}
Now note that if $z^{nm}_i$ were non-random, then $z^{nm}_i\rightarrow 0$, as $m\rightarrow\infty$, $n\rightarrow\infty$, would imply
$\frac{1}{n}\sum_{i=1}^nz^{nm}_i\rightarrow 0$ as $m\rightarrow\infty$, $n\rightarrow\infty$.
Hence, by Theorem 7.15 of \ctn{Schervish95} (page 398), it follows that 
$$T_1(\tilde\bX^{nm})-T_1(\bX_n)\stackrel{P}{\longrightarrow}0,~\mbox{as}~m\rightarrow\infty,~n\rightarrow\infty,~\mbox{almost surely}.$$
In other words, (\ref{eq:consistency_T1})) holds.
\end{proof}

\section{Discussion of the applicability of our asymptotic results in the inverse regression contexts}
\label{sec:discussion_results}
From the development of the asymptotic results it is clear that there are two separate aspects that ensures consistency of the LOO-CV posteriors.
The first is consistency of the posterior of the parameter(s) $\theta$, and then consistency of $\pi(\tilde x_i|\theta,\by_i)$. Once consistency
of the posterior of $\theta$ is ensured, our prior for $\tilde x_i$ then guarantees consistency of the posterior of $\tilde x_i$ at $x_i$.
For verify consistency of the posterior of $\theta$, we referred to the general conditions of Shalizi because of their wide applicability, including dependent
setups, and relatively easy verifiability of the conditions. Indeed, the seven conditions of Shalizi have been verified in the contexts of general stochastic process 
(including Gaussian process) regression (\ctn{Chatterjee1}) with both Gaussian and double exponential errors, 
binary and Poisson regression involving general stochastic process (including Gaussian process) and known link functions (\ctn{Chatterjee2})
Moreover, for finite-dimensional parametric problems, the conditions are much simpler to verify. Thus, the examples provided in Section \ref{subsec:illustrations_prior}
are relevant in this context, and the LOO-CV posteriors, and hence the IRD, are consistent. Furthermore, \ctn{Chandra1} and \ctn{Chandra2} establish the
conditions of Shalizi in an autoregressive regression context, even for the so-called ``large $p$, small $n$" paradigm. In such cases, our asymptotic results
for the LOO-CV posteriors and the IRD, will hold. 

There is one minor point to touch upon regarding our requirement for ensuring consistency. In all the aforementioned works regarding verification of Shalizi's conditions,
$m=1$ was considered. For our asymptotic theory, we first require consistency of $\theta$ as $n\rightarrow\infty$, for fixed $m\geq 1$, and then take the limit as
$m\rightarrow\infty$. This is of course satisfied if consistency
holds for $m=1$, as for more information about $\theta$ brought in for larger values of $m$, consistency automatically continues to hold. 
Indeed, for fixed $m\geq 1$, the limit as $n\rightarrow\infty$ does not depend upon $m$, as the posterior of $\theta$ converges weakly to 
the point mass at $\theta_0$, almost surely. Thus, it is always sufficient to verify consistency of the posterior of $\theta$ for $m=1$.

\section{Simulation studies}
\label{sec:simstudy}
\subsection{Poisson parametric regression}
\label{subsec:poisson_parametric}
Let us first consider the case where $y_{ij}\sim Poisson(\theta x_i)$, as briefed in Section \ref{subsec:illustrations_prior} (i). Here we investigate consistency
of the posterior of $\tilde x_i$. We generate the data by simulating $\theta\sim Uniform(0,2)$, $x_i\sim Uniform(0,2)$, $i=1,\ldots,n$, and then by generating
$y_{ij}\sim Poisson(\theta x_i)$, for $j=1,\ldots,m$ and $i=1,\ldots,n$.
We set $\pi(\theta)=1$; $\theta>0$, for the prior for $\theta$.

Since numerical integration turned out to be unstable, we resort to Gibbs sampling from the posterior, noting that the full conditional
distributions of $\theta$ and $\tilde x_i$ are of the forms
\begin{align}
	[\theta|\tilde x_i,\bX_{n,-i},\bY_{nm}]&\propto\theta^{\sum_{i=1}^n\sum_{j=1}^my_{ij}}\exp\left\{-m\theta\left(\tilde x_i+\sum_{j\neq i}x_j\right)\right\}
	I_{\left[\frac{\max\left\{0,\bar y_i-cs_i/\sqrt{m}\right\}}{\tilde x_i},\frac{\bar y_i+cs_i/\sqrt{m}}{\tilde x_i}\right]}(\theta);\notag\\
	[\tilde x_i|\theta,\bX_{n,-i},\bY_{nm}]&\propto\tilde x^{m\bar y_i}_i\exp\left(-m\theta \tilde x_i\right)
	I_{\left[\frac{\max\left\{0,\bar y_i-cs_i/\sqrt{m}\right\}}{\theta},\frac{\bar y_i+cs_i/\sqrt{m}}{\theta}\right]}(\tilde x_i).\notag
\end{align}
It follows that $[\theta|\tilde x_i,\bX_{n,-i},\bY_{nm}]$ has the gamma distribution with shape parameter $\sum_{i=1}^n\sum_{j=1}^my_{ij}+1$
and rate parameter $m\left(\tilde x_i+\sum_{j\neq i}x_j\right)$, truncated on 
$\left[\frac{\max\left\{0,\bar y_i-cs_i/\sqrt{m}\right\}}{\tilde x_i},\frac{\bar y_i+cs_i/\sqrt{m}}{\tilde x_i}\right]$.
Similarly, $[\tilde x_i|\theta,\bX_{n,-i},\bY_{nm}]$ has the gamma distribution with shape parameter $m\bar y_i+1$ and rate parameter $m\tilde x_i$, truncated on
$\left[\frac{\max\left\{0,\bar y_i-cs_i/\sqrt{m}\right\}}{\theta},\frac{\bar y_i+cs_i/\sqrt{m}}{\theta}\right]$. 

For our investigation, we set $i=1$. That is, without loss of generality, we address consistency of the posterior of $\tilde x_1$ via simulation study.  
As for the choice of $c$, we set $c=20$. This choice ensured
that the full conditional distributions have reasonably large support, for given values of $n$ and $m$. We run our Gibbs sampler for $11000$ iterations, and discard
the first $1000$ iterations as burn-in.

Figure \ref{fig:poisson_parametric} displays the posterior densities of $\tilde x_1$ for different values of $m$ and $n$; 
here, for convenience of presentation, we have set $m=n$.
The vertical line denotes the true value $x_1$.
The diagram vividly depicts that the LOO-CV posterior of $\tilde x_1$ concentrates more and more around $x_1$ as $n$ and $m$ increase. 
\begin{figure}
\begin{center}
\includegraphics[width=9cm,height=9cm]{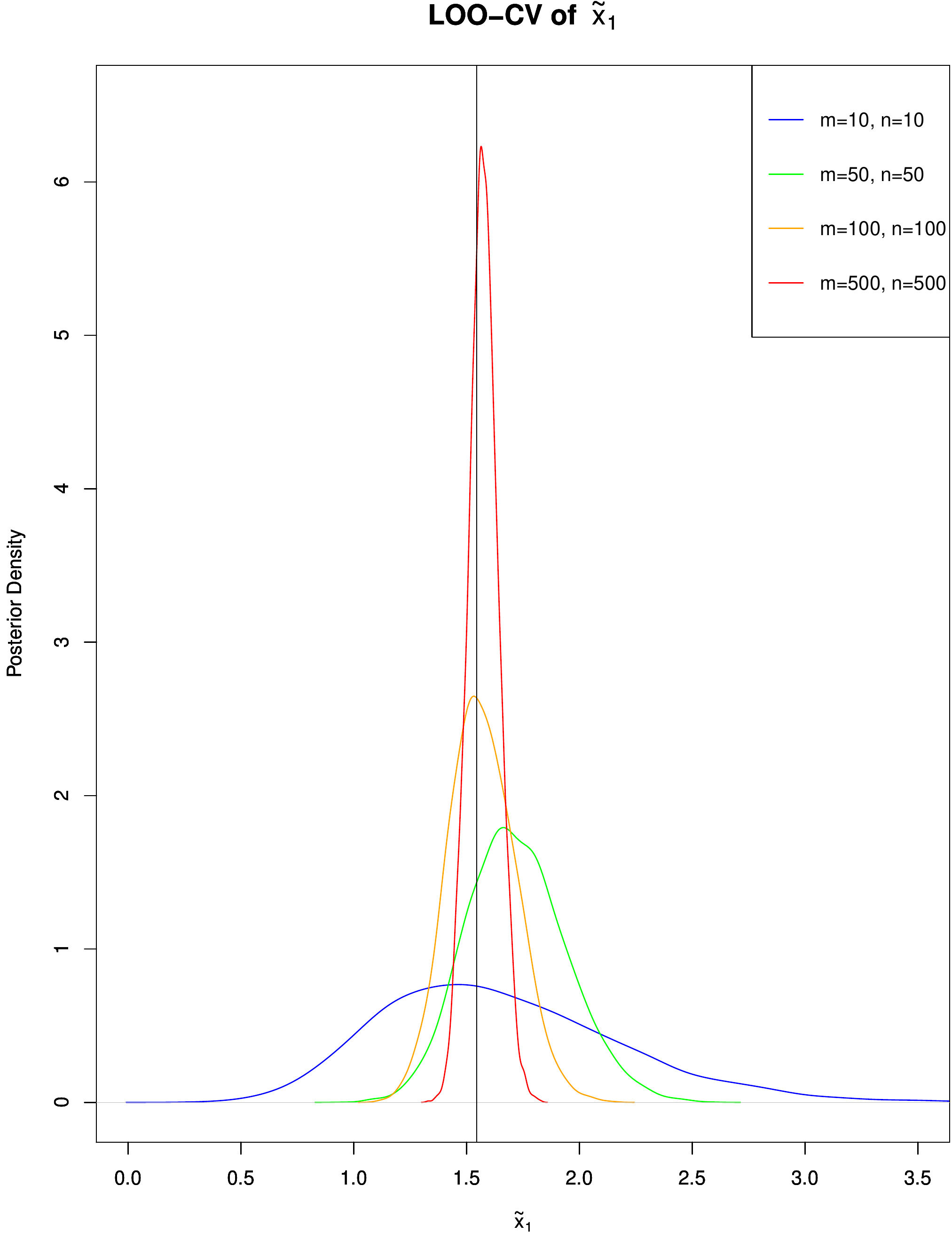}
\end{center}
\caption{Demonstration of posterior consistency in inverse paremetric Poisson regression. The vertical line denotes the true value.}
\label{fig:poisson_parametric}
\end{figure}

\subsection{Poisson nonparametric regression}
\label{subsec:poisson_nonparametric}
We now consider the case where $y_{ij}\sim Poisson(\lambda(x_i))$, where $\lambda(x)=H(\eta(x))$, as briefed in Section \ref{subsec:illustrations_prior} (ii).
In particular, we let $H(\cdot)=\exp(\cdot)$ and $\eta(\cdot)$ be a Gaussian process with mean function $\mu(x)=\alpha+\beta x$ and covariance 
$Cov\left(\eta(x_1),\eta(x_2)\right)=\sigma^2\exp\left\{-(x_1-x_2)^2\right\}$, where $\sigma$ is unknown. We assume that the true data-generating distribution
is $y_{ij}\sim Poisson(\lambda(x_i))$, with $\lambda(x)=\exp(\alpha_0+\beta_0(x))$. We generate the data by simulating $\alpha_0\sim Uniform(-1,1)$, $\beta_0\sim Uniform(-1,1)$
and $x_i\sim Uniform(-1,1)$; $i=1,\ldots,n$, and then finally simulating $y_{ij}\sim Poisson(\lambda(x_i))$; $j=1,\ldots,m$, $i=1,\ldots,n$.

For our convenience, we reparameterize $\sigma^2$ as $\exp(\omega)$, where $-\infty<\omega<\infty$. 
For the prior on the parameters, we set $\pi\left(\alpha,\beta,\omega\right)=1$, for $-\infty<\alpha,\beta,\omega<\infty$. Now note that
the prior for $\tilde x_i$, which is uniform on 
$B_{im}(\eta)=\left\{x:\eta(x)\in H^{-1}\left\{\left[\bar y_i-\frac{c_1s_i}{\sqrt{m}},\bar y_i+\frac{c_2s_i}{\sqrt{m}}\right]\right\}\right\}$, does not have a closed form,
since the form of $\eta(x)$ is unknown. However, if $m$ is large, the interval 
$H^{-1}\left\{\left[\bar y_i-\frac{c_1s_i}{\sqrt{m}},\bar y_i+\frac{c_2s_i}{\sqrt{m}}\right]\right\}$ is small, and $\eta(x)$ falling in this small interval
can be reasonably well-approximated by a straight line. Hence, we set $\eta(x)=\mu(x)=\alpha+\beta x$, for $\eta(x)$ falling in this interval.
In our case, it follows that $[\tilde x_i|\eta]\sim Uniform(a,b)$, where $a=\beta^{-1}\left(\log\left(\bar y_i-\frac{c_1s_i}{\sqrt{m}}\right)-\alpha\right)$
and $b=\beta^{-1}\left(\log\left(\bar y_i+\frac{c_2s_i}{\sqrt{m}}\right)-\alpha\right)$.

We set $c_1=1$ and $c_2=100$, for ensuring positive value of $\bar y_i-\frac{c_1s_i}{\sqrt{m}}$ (so that logarithm of this quantity is well-defined) 
and a reasonably large support of the prior for $\tilde x_i$. As before, we set $i=1$, for our purpose, thus focussing on posterior consistency of $\tilde x_1$ only.

In this example, both numerical integration and Gibbs sampling are infeasible. Hence, we resort to Transformation based Markov Chain Monte Carlo (TMCMC)
(\ctn{Dutta13}) for simulating from the posterior. In particular, we use the additive transformation and update all the unknowns simultaneously, in a single block. 
More specifically, at each iteration $t=1,2,\ldots$, we first generate $\epsilon\sim N(0,1)$, a standard normal variable. Then, letting 
$\left(\tilde x^{(t)}_1,\alpha^{(t)},\beta^{(t)},\omega^{(t)},\eta^{(t)}(x_2),\ldots,\eta^{(t)}(x_n)\right)$
denote the values of the unknowns at the $t$-th iteration, at the $(t+1)$-th iteration we set $\tilde x_1=\tilde x^{(t)}_1\pm 0.5\epsilon$, 
$\alpha=\alpha^{(t)}\pm 0.5\epsilon$, $\beta=\beta^{(t)}\pm 0.5\epsilon$, $\omega=\omega^{(t)}\pm 0.05\epsilon$, and $\eta(x_k)=\eta^{(t)}(x_k)\pm 0.00005\epsilon$; 
$k=2,\ldots,n$. We accept these proposed values with an appropriate acceptance probability (see \ctn{Dutta13} for details), provided the prior conditions are satisfied.  
This strategy has yielded reasonable mixing properties of the additive TMCMC algorithm, for all values of $n$ and $m$ chosen. 
We run our additive TMCMC algorithm for $11000$ iterations, discarding the first $1000$ iterations as burn-in.

Figure \ref{fig:poisson_nonparametric} shows the posterior densities of $\tilde x_1$ for this nonparametric inverse regression problem for different values of $n$ and $m$.
Again, it is clearly evident that the posterior concentrates more and more around the true value $x_1$, as $n$ and $m$ are increased. 
\begin{figure}
\begin{center}
\includegraphics[width=9cm,height=9cm]{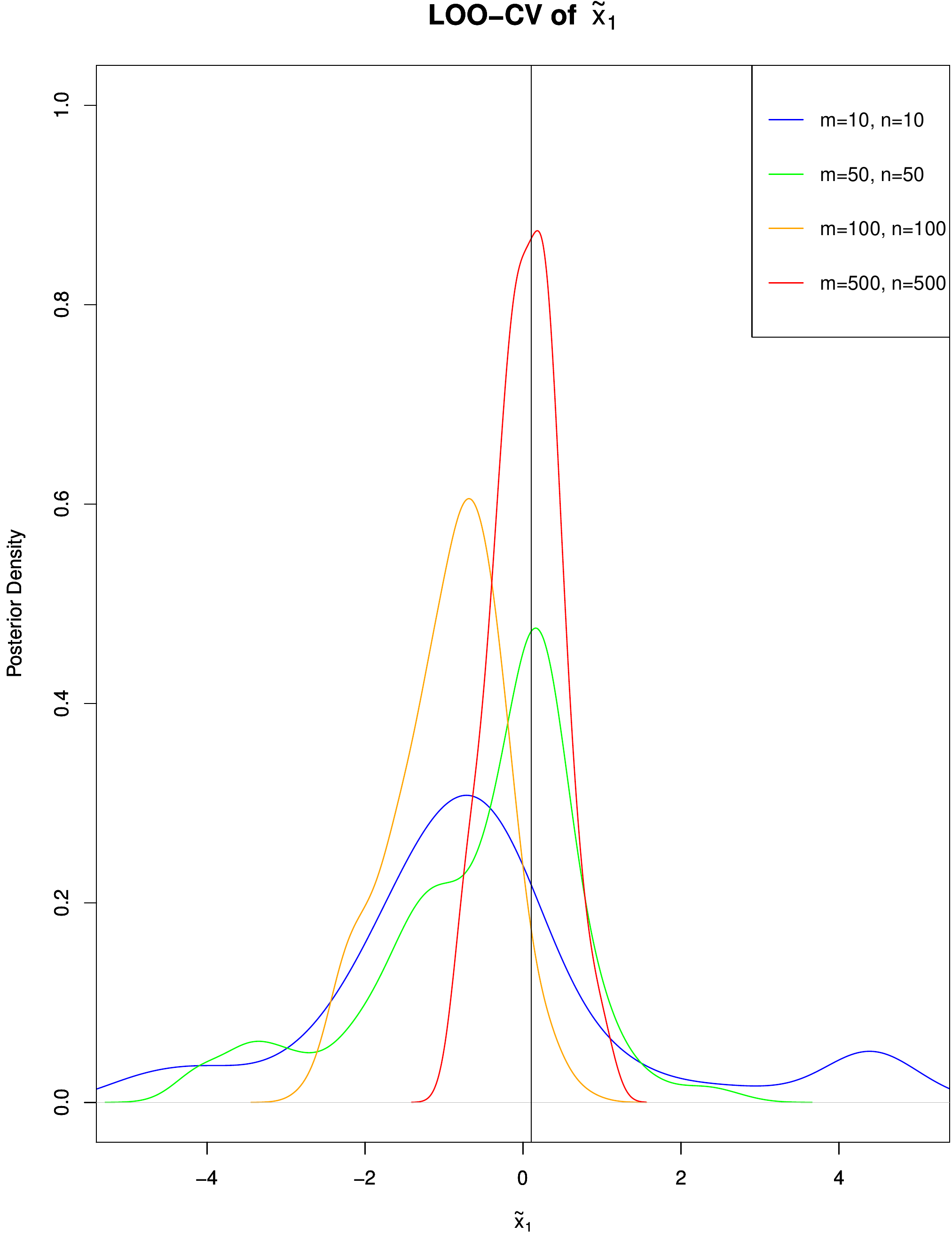}
\end{center}
\caption{Demonstration of posterior consistency in inverse nonparemetric Poisson regression. The vertical line denotes the true value.}
\label{fig:poisson_nonparametric}
\end{figure}

\section{Conclusion}
\label{sec:conclusion}
In this paper, we have proposed a prior for $\tilde x_i$ that seems to be natural for ensuring consistency of the LOO-CV posteriors, and hence
of the IRD approach. Crucially, we need $m$ observations corresponding to each $x_i$, and $m$ is taken to infinity for the asymptotic theory.
Note that for $m=1$, or for any finite $m$, consistency of the LOO-CV posterior of $\tilde x_i$ not achievable, even though consistency of the corresponding posterior of $\theta$
is attainable for any $m\geq 1$. This issue sets apart the problem of LOO-CV consistency from the usual parameter consistency. 

An interesting issue is that, for forward Bayesian problems, the posterior predictive distribution of the $i$-th response $y_i$ does not tend to point mass at $y_i$, 
even if the corresponding posterior of $\theta$ is consistent at $\theta_0$. The reason is that the distribution of $y_i$ given $\theta$ and $x_i$ is specified
as per the modeled likelihood, and does not admit any prior construction as in the inverse setup. Since the modeled response variable is always associated with positive
variability, even under the true model, the posterior predictive distribution of $y_i$ always has positive variance, and hence, can not be consistent at $y_i$.
From this perspective, even in forward problems, it perhaps makes sense to consider the IRD approach for model validation. 
Indeed, our simulation studies demonstrate the effectiveness of the IRD approach to model validation compared to the forward approach.

As a final remark, we mention that for our prior on $\tilde x_i$ we required independence among $\{y_{i1},\ldots,y_{im}\}$, for the strong law of large numbers
to hold for $\bar y_i$ and $s^2_i$. However, independence is not strictly necessary, as the ergodic theorem can often be utilized for ensuring limits in the strong sense.

\begin{appendix}

\section*{Appendix}

\section{Preliminaries for ensuring posterior consistency under general setup}
\label{sec:shalizi}

Following \ctn{Shalizi09} we consider a probability space $(\Omega,\mathcal F, P)$, 
and a sequence of random variables $y_1,y_2,\ldots$,   
taking values in some measurable space $(\Xi,\mathcal Y)$, whose
infinite-dimensional distribution is $P$. Let $\bY_n=\{y_1,\ldots,y_n\}$. The natural filtration of this process is
$\sigma(\bY_n)$, the smallest $\sigma$-field with respect to which $\bY_n$ is measurable. 

We denote the distributions of processes adapted to $\sigma(\bY_n)$ 
by $F_{\theta}$, where $\theta$ is associated with a measurable
space $(\Theta,\mathcal T)$, and is generally infinite-dimensional. 
For the sake of convenience, we assume, as in \ctn{Shalizi09}, that $P$
and all the $F_{\theta}$ are dominated by a common reference measure, with respective
densities $f_{\theta_0}$ and $f_{\theta}$. The usual assumptions that $P\in\Theta$ or even $P$ lies in the support 
of the prior on $\Theta$, are not required for Shalizi's result, rendering it very general indeed.

\subsection{Assumptions and theorems of Shalizi}
\label{subsec:assumptions_shalizi}

\begin{itemize}
\item[(S1)] Consider the following likelihood ratio:
\begin{equation*}
R_n(\theta)=\frac{f_{\theta}(\bY_n)}{f_{\theta_0}(\bY_n)}.
\end{equation*}
Assume that $R_n(\theta)$ is $\sigma(\bY_n)\times \mathcal T$-measurable for all $n>0$.
\end{itemize}

\begin{itemize}
\item[(S2)] For every $\theta\in\Theta$, the KL-divergence rate
\begin{equation*}
h(\theta)=\underset{n\rightarrow\infty}{\lim}~\frac{1}{n}E\left(\log\frac{f_{\theta_0}(\bY_n)}{f_{\theta}(\bY_n)}\right).
\end{equation*}
exists (possibly being infinite) and is $\mathcal T$-measurable.
\end{itemize}

\begin{itemize}
\item[(S3)] For each $\theta\in\Theta$, the generalized or relative asymptotic equipartition property holds, and so,
almost surely,
\begin{equation*}
\underset{n\rightarrow\infty}{\lim}~\frac{1}{n}\log R_n(\theta)=-h(\theta).
\end{equation*}
\end{itemize}

\begin{itemize}
\item[(S4)] 
Let $I=\left\{\theta:h(\theta)=\infty\right\}$. 
The prior $\pi$ satisfies $\pi(I)<1$.
\end{itemize}

\begin{itemize}
\item[(S5)] There exists a sequence of sets $\mathcal G_n\rightarrow\Theta$ as $n\rightarrow\infty$ 
such that: 
\begin{enumerate}
\item[(1)]
\begin{equation}
\pi\left(\mathcal G_n\right)\geq 1-\alpha\exp\left(-\beta n\right),~\mbox{for some}~\alpha>0,~\beta>2h(\Theta);
\label{eq:S5_1}
\end{equation}
\item[(2)]The convergence in (S3) is uniform in $\theta$ over $\mathcal G_n\setminus I$.
\item[(3)] $h\left(\mathcal G_n\right)\rightarrow h\left(\Theta\right)$, as $n\rightarrow\infty$.
\end{enumerate}
\end{itemize}
For each measurable $A\subseteq\Theta$, for every $\delta>0$, there exists a random natural number $\tau(A,\delta)$
such that
\begin{equation}
n^{-1}\log\int_{A}R_n(\theta)\pi(\theta)d\theta
\leq \delta+\underset{n\rightarrow\infty}{\lim\sup}~n^{-1}
\log\int_{A}R_n(\theta)\pi(\theta)d\theta,
\label{eq:limsup_2}
\end{equation}
for all $n>\tau(A,\delta)$, provided 
$\underset{n\rightarrow\infty}{\lim\sup}~n^{-1}\log\pi\left(\mathbb I_A R_n\right)<\infty$.
Regarding this, the following assumption has been made by Shalizi:
\begin{itemize}
\item[(S6)] The sets $\mathcal G_n$ of (S5) can be chosen such that for every $\delta>0$, the inequality
$n>\tau(\mathcal G_n,\delta)$ holds almost surely for all sufficiently large $n$.
\end{itemize}
\begin{itemize}
\item[(S7)] The sets $\mathcal G_n$ of (S5) and (S6) can be chosen such that for any set $A$ with $\pi(A)>0$, 
\begin{equation}
h\left(\mathcal G_n\cap A\right)\rightarrow h\left(A\right),
\label{eq:S7}
\end{equation}
as $n\rightarrow\infty$.
\end{itemize}

\end{appendix}

\bibliography{irmcmc}

\end{document}